\documentclass[preprint,12pt]{elsarticle}
\usepackage{comment}

\usepackage{amssymb}
\usepackage{comment}
\usepackage{amsmath}
\usepackage{amsmath, tikz}
\usetikzlibrary{fit, positioning}
\usepackage[european]{circuitikz}

 \newtheorem{theo}{Theorem}
 \newtheorem{lem}{Lemma}
 \newdefinition{rmk}{Remark}
 \newdefinition{defi}{Definition}
 \newdefinition{assum}{Assumption}
 \newproof{proof}{Proof}
 \newproof{pot}{Proof of Theorem \ref{thm2}}

\journal{Computers \& Mathematics with Applications}

\begin{document}

\begin{frontmatter}



\title{Pathwise convergence of a linearization scheme for  stochastic differential-algebraic  equations under the local  Lipschitz coefficients}


\author[gt]{Guy Tsafack} 
\ead{guy.tsafack@hvl.no (guytsafack4@gmail.com)}
\address[gt]{Department of Computer science, Electrical engineering and Mathematical sciences,  Western Norway University of Applied Sciences, Inndalsveien 28, 5063 Bergen, Norway.}
\author[gt,at1]{Antoine Tambue} 
\ead{antoine.tambue@hvl.no (antonio@aims.ac.za)}
\address[at1]{The Department of Mathematics \& Applied Mathematics, University of Cape Town, Private Bag, 7701
Rondebosch, Cape Town, South Africa }
\begin{abstract}
The paper deals with the numerical treatment of index-1  stochastic differential-algebraic equations (SDAEs) with nonlinear coefficients that satisfy the local Lipschitz and the Khasminskii conditions. The key challenge  here is the presence of a singular and non-autonomous matrix in the equation, which makes the numerical method challenging to  analyze. To tackle this challenge, we develop a more general numerical method using a local linearization technique. More precisely, we use the Taylor expansion to decompose  locally the drift component of the SDAEs in linear and nonlinear parts. The linear part is approximated implicitly and must resolve the singularity issue of each time step, while the  nonlinear part is approximated explicitly.
  This method is fascinating due to the fact that it
  is efficient  in high dimension.
  We prove that this novel numerical method converges in the pathwise sense with rate  $\frac{1}{2}-\epsilon$, for arbitrary $\epsilon >0$. 
The implementation of this novel numerical method is also carried out to verify our theoretical result. 

\end{abstract}



\begin{keyword}


Stochastic differential-algebraic equations\sep nonlinear and local Lipschitz coefficients\sep Khasminskii condition \sep local linearization method.
\end{keyword}

\end{frontmatter}



\section*{Introduction}
We are interesting in the numerical solution of the following  general stochastic differential algebraic equation:
\begin{equation}\label{equa1}
	A(t)dZ(t)=f(t,Z(t))dt + g(t,Z(t))dW(t), \quad t\in \left[0,T\right] ,~Z(0)=\zeta\in D,
\end{equation}
where $T>0,\quad T\ne \infty$, and  matrix  A : $  t \longmapsto A(t) $ is continuously differentiable and   $ A(t)\in  \mathcal{M}_{d\times d}(\mathbb{R})$ is a singular matrix for all  $ t\in \left[0,T \right]$.
The coefficients  $f:\left[0,T\right]\times D  \to \mathbb{R}^d  $ and $g:\left[0,T\right]\times D\to \mathbb{R}^{d\times d_1} $ are 
Borel functions on $\left[0,T\right]\times D$ with $D\subseteq\mathbb{R}^d$ . 
The process $W(\cdot)$ is a $d_1$-dimensional  Wiener process defined in the probability space $(\Omega, \mathcal{F},\mathbb{P})$ with natural filtration $\left( \mathcal{F}_t\right)_{t\geq 0} $. 
The unknown variable $Z$ is a  $d-$dimensional stochastic process  that depends on the time $t\in \left[0, T\right] $ and the sample $\omega\in\Omega$. To simplify the notation, the argument $\omega$ is omitted. Our assumption on equation \eqref{equa1} will allow the solution $Z(t)$ to never leave $D$ and just for convenience, we assume that $f(t,Z(t))=0_d$\footnote{null vector in $\mathbb{R}^d$} and $g(t,Z(t))=0_{d\times d_1}$\footnote{null matrix in $\mathbb{R}^{d\times d_1}$} for $Z\notin D,~t\in [0,T]$.\\


Stochastic modeling has become essential across numerous scientific disciplines \cite{pavliotis2014stochastic,guttorp2018stochastic} and industrial applications \cite{grigoriu2013stochastic, henderson2006stochastic}, as many real-world phenomena inherently involve noise or random fluctuations. 
Stochastic equations exist in various forms, with stochastic ordinary differential equations being among the most extensively studied \cite{Khasminskii,mao2015truncated,Mao2007}.
However, when modeling real world problems, it is recommended to incorporate some conditions to obtain a more realistic model \cite{campbell2019applications}.
These additional conditions are called constraint equations. Combining the two equations (Stochastic Differential Equations (SDEs) and constraints), we obtain  the stochastic differential algebraic equations (SDAEs) \cite{schein1999stochastic, denk2008efficient, tsafack2025strong}.  In simple terms, SDAEs are the general form of SDEs.
An important example where SDAEs usually arise is in modeling electronic circuits disturbed by the so-called electrical noise \cite{schein1998numerical, penski1999analysis, Renate}. In the literature, the theoretical and numerical studies of the SDEs and Differential Algebraic Equations (DAEs) are well developed and have grown considerably over the last 40 years \cite{hairer2006numerical, wanner1996solving,klebaner2012introduction,lawler2018introduction, Mao2007, mao2015truncated}. However, the study of SDAEs in both theoretical and numerical treatment is under development.  The first results in SDAEs were the study of the existence and uniqueness of the solution under the global Lipschitz condition \cite{Renate, cong2010stochastic,schein1998numerical}. However, most of the equations do not satisfy the global Lipschitz condition \cite{mao2015truncated, izgi2021strong}. Recently  in \cite{serea2025existence, monarticle2}, the existence and uniqueness of the solution of SDAEs under non-global Lipschitz conditions has been studied. Due to the non-linearity nature of such equations, it is very difficult or sometimes impossible to find the exact solution, even if the well-posedness is well understood.  
 So, numerical studies  are the only hope for realistic applications.
The first numerical method was developed in \cite{schein1999stochastic}, where the authors proposed a two-step method as well as a generalized Euler method for linear SDAEs. Note that this method is restricted to linear SDAEs with multi-dimensional additive noise. In  \cite{romisch2003stochastic, Renate, kupper2012runge}, the authors introduced numerical methods for index-1 nonlinear SDAEs with globally Lipschitz continuous coefficients. In the first approach, they proposed a drift-implicit Euler method, a split-step backward Euler scheme, and a trapezoidal method \cite{romisch2003stochastic, Renate}. In the second approach, they used a Runge–Kutta method  and studied the mean-square error \cite{ kupper2012runge}.
For all those stable  implicit time stepping methods, the keys challenge is that at each iteration, we need to solve systems of nonlinear algebraic equations with, for example, Newton's method,
which may lead to bottlenecks in practical computations in high dimensions.


Note that  several authors have studied pathwise convergence in the context of stochastic differential equations \cite{kloeden2007pathwise, jentzen2009pathwise,nguyen2012pathwise}. This involves computing the pathwise error, which is a random quantity but provides more information about the error of the approximated solution. It is important to note that in several problems with less regularity \cite{kloeden2007pathwise, jentzen2009pathwise}, it is challenging to study the strong or the weak convergence, and then the only alternative method is the study of the pathwise convergence.  
It arises naturally in many important applications \cite{kloeden2007pathwise, jentzen2009pathwise}. 
In \cite{monarticle2}, pathwise convergence is studied for equation of type \eqref{equa1} with semi linear function $f$, this means that $f$ can be written as $  f(t,Z)=B(t)Z+h(t,Z)$, where $B(t)$ is a matrix independent of $Z$ and $h(., .)$ is a nonlinear function, non-globally Lipschiz respect to $Z$. The scheme developed in \cite{monarticle2} was based on semi implicit method where the matrix  $B(t)$  has for duty to solve the singularity issue of $A(t)$  at each iteration. 
The scheme in \cite{monarticle2} has been efficient to solve large system of SDAEs with semi linear function $f$.  However, in the case where the drift part $f$ is fully  nonlinear ($B(t)=0$), the method used in \cite{monarticle2} cannot be applied. To the best of our knowledge, pathwise convergence for general SDAEs is not yet well understood in the literature. 

The objective of this paper is to develop and analyze a novel numerical method  to approximate the solutions of nonlinear, non-autonomous stochastic differential-algebraic equations of type \eqref{equa1} under non-global Lipschitz and Khasminskii-type conditions and to establish its pathwise convergence. To construct the proposed numerical method, we begin by linearizing the drift component of the SDAEs using the Taylor approximation. Subsequently, a semi-implicit discretization scheme is applied to the transformed system, wherein the implicit part is explicitly designed to address the singularity issues inherent in the original SDAEs. This leads to the construction of a nonsingular matrix defined by $D_n=A(t_n) -\frac{T}{N}J(t_n,Z_n)$ where $J$ denotes the Jacobian matrix \footnote{The first-order partial derivative with respect to the variable $Z$} of $f$,  for all $N\in \mathbb{N}$, and $n=0,1,2,...,N-1$. 
The main idea of the method is the local linearization of the original stochastic differential-algebraic equation, resulting  to a linear system that is solved  at each time step to move forward. Due to the fact that the linearization is local, the corresponding scheme is extremely much difficult to analyze comparing to  scheme developed in  \cite{monarticle2} for semi linear SDAEs. Under the assumption that the inverse of each matrix $D_n$ is uniformly bounded, we provide a rigorous mathematical analysis  of  pathwise convergence of the proposed method.


This paper is organized as follows. Section \ref{sec1} introduces the necessary assumptions and discusses the well-posedness of the stochastic differential-algebraic equations of type \eqref{equa1} under non-global Lipschitz and Khasminskii-type conditions. In Section \ref{sec2}, we demonstrate that the standard explicit Euler–Maruyama method is not suitable for approximating solutions of SDAEs, unlike in the case of SDEs. This limitation arises because of  the presence of a singular matrix and the nonlinearity of the SDAEs coefficients. In addiction, we propose our  novel numerical method to approximate  the samples of  SDAEs of type  \eqref{equa1} and provide its pathwise convergence analysis. Finally, Section \ref{sec3} presents numerical simulations to illustrate our theoretical  result.

\section{Well-posedness of the SDAEs}\label{sec1}

Throughout this work,  $(\Omega, \mathcal{F},\mathbb{P})$ denotes a complete probability space with natural
filtration $\left( \mathcal{F}_t\right)_{t\geq 0} $. We define $\left\|z \right\|^2=\sum_{i=1}^{d}\left| z_i\right|^2$ for any vector $z\in \mathbb{R}^d$ and $\left| B \right|^2_F=\sum_{i=1}^{d}\sum_{j=1}^{d_1}\left|b_{i,j}\right| ^2 $ is the Frobenius norm for any matrix $B=(b_{i,j})_{i,j=1}^{d, d_1}$.  We also denote by $\left\|\cdot\right\|_{\infty}$ the supremum norm for continuous functions. Moreover, for $A:t\mapsto  A(t),~t\in \left[ 0,T\right]$, we have $\left\| A\right\|_{\infty}:=\max_{t\in \left[ 0,T\right]}\left| A(t)\right| _F$. 

\begin{defi}\label{def1}
				The SDAEs (\ref{equa1}) is said to be of index-1  if the noise sources do not appear in the constraints and the constraints (AEs) are globally uniquely solvable. 
			\end{defi}
         Note that  the noise sources do
not appear in the constraints means that $Img(t,Z)\subseteq ImA(t)$, for all $Z\in D, t\in \left[0,T\right] $.
We continue with the following hypothesis, which is necessary for the well-posedness result. 
\begin{assum}\label{asum1} We assume that

\begin{enumerate}

\item [(A1.1)]  The functions $f(\cdot,\cdot)$ and $g(\cdot,\cdot)$ are continuous and locally Lipschitz with respect to $Z$ i.e. for any $q \geq 1$,   there exists $L_q > 0$, such that
\begin{equation}\label{equa4}
		\left\| f(t,Z)-f(t,Y)\right\|\lor	\left| g(t,Z)-g(t,Y)\right|_F \leq L_q \left\|Y- Z\right\|,~t\in \left[0,T\right],~~~~~~~~~
\end{equation}
for all $ Z, Y\in D$ with $ \left\|Z \right\|\lor \left\|Y \right\|<q $.
\item [(A1.2)]   There exists an increasing sequence of bounded domains $\left\lbrace D_q\right\rbrace_{q=1}^{\infty} $ such that\\ $\cup_{q=1}^{\infty}D_q=D$,  and for every $q\geq 1$, $t\in \left[0,q\right],$
				$$\sup_{Z\in D_q}\left\|f(t,Z) \right\|\leq M_q, ~~\sup_{Z\in D_q}\left|g(t,Z) \right|_F^2\leq M_q ,~~~\text{where}~ M_q~ \text{is a constant.}~~~~~~~$$ 
                Note that similar assumptions can be found in \cite{gyongy1998note, gyongy1996existence, monarticle2}.
				\item[(A1.3)] The functions $f(\cdot,\cdot)$ and $g(\cdot,\cdot)$ satisfied the Khasminskii condition \footnote{ See for example \cite[ Theorem 3.5]{Khasminskii} or in \cite{gyongy1998note}.}, i.e., there exists a non-negative function $V(\cdot, \cdot)\in \mathcal{C}^{1,2}(\left[0,T \right]\times D)$ such that
				$$LV(t,Z)\leq MV(t,Z),  \forall t\in \left[0,T \right], Z\in D,~~~~~~~~~~~~~~~~~~~~~~~~~~~~~~~~~~~~~~~~~~~~~~~~$$ 
				$$ V_q(T):=\inf_{ X\in \partial D_q, ~~ t\leq T} V(t,Z)\to \infty, ~~ as ~~ q\to \infty , \forall T<\infty;~~~~~~~~~~~~~~~~~~~~~~~~~~~~$$
				where $M=M(T)$  is a constant, $\partial D_q$ denotes the boundary of $D_q$ and $L$ is the differential operator 
				$$L=\frac{\partial}{\partial t}+\sum_{i}(A^-f)_i(t,Z)\frac{\partial}{\partial z_i}+\frac{1}{2}\sum_{i}(A^-g(A^-g)^T)_{ij}(t,Z)\frac{\partial^2}{\partial z_i \partial z_j}.~~~~~~~~~~~~~~~~~~~~~~~~~~~~~~~~~~~~~$$
				\item [(A1.4)]  $\mathbb{P}(\zeta\in D)=1$.
\item [(A1.5)]  The singular matrices $A(t),~t \in \left[0,T \right] $ and the pseudo-inverse matrix $A^-(t) , ~ t\in \left[0,T \right]$ are such that the function $P(t)=A^-(t)A(t)$ is differentiable and satisfy the relation
\begin{equation}\label{eqp2}
	(A^-(t)A(t))'=0_{d\times d},~~t\in \left[ 0,T\right].~~~~~~~~~~~~~~~~~~~~~~~~~~~~~~~~~~~~~~~~~~~~~~~~~~
\end{equation}
Here, $A'(t)$ represents the derivative with respect to the time $t$, and $0_{d\times d}$ is a null matrix of size $d\times d$.

\end{enumerate}
\end{assum}
\begin{rmk}\label{rem34}
	  Note that if $D=\mathbb{R}^d$, we can take  $D_q=\{Z\in \mathbb{R}^d, ~~\|Z\|<q,\;\;\;q\geq 1\},$  and  Assumption \ref{asum1}  (A1.2)  becomes
		$$ \sup_{\|Z\|<q}\{\|f(t,Z)\|+|g(t,Z)|_F^2\}\leq 2M_q, \text{ for every  } t\in [0,T] \text{ and } q\geq 1.$$
		In addition by  taking  $V(t,Z)=(1+\|Z\|^2)e^{-Mt} ;~M>0$,  the Assumption \ref{asum1} (A1.3)  gives   the  standard monotonicity  condition and for all $ t\in [0,T], ~Z\in \mathbb{R}^d $ 
		$$\langle (A^{-}(t)A(t) Z)^T, A^-(t)f(t,Z)\rangle+ \frac{1}{2}|A^-(t)g(t,Z)|_F^2\leq M(1+\|Z\|^2).$$
        Note that  $A^-=A^-AA^-$. 
	\end{rmk}
		\begin{rmk}\label{rem1}
		Note that a function  $f$ defined on $[0,T]\times D$ and satisfying (A1.2) of  Assumption \ref{asum1}   is locally Lipschitz in $ D$  if there exists  a bounded measurable   function  $f_q: [0,T]\times \mathbb{R}^d\to \mathbb{	R}^d$ such that $f_q(t,Z)=f(t,Z)$  for  $ t\in [0,T],~Z\in D_q,\,\, q\geq 1$ and 
				\begin{eqnarray}
				\left\{\begin{array}{l}
				\|f_q(t,Z)\|	\leq L_q, ~Z\in \mathbb{R}^d , t\in \left[0,T \right],\\
					\newline\\
						\|f_q(t,Z)-f_q(t,Y)\|\leq L_q\|Z-Y\|, ~Z,Y\in \mathbb{R}^d , t\in \left[0,T \right].
				\end{array}\right.
			\end{eqnarray} 
		\end{rmk}	
        The next lemma plays a crucial role in our analysis.
        \begin{lem}\label{lemaw}
	Let $Z(t)$ be  the $\mathcal{F}_t$-adapted process  solution  of \eqref{equa1}  for $t<\tau:=\inf\{t: Y(t)\notin D\}$.  Assume that the Assumption \ref{asum1}  is  satisfied,  then $\tau=\infty$ a.s.
\end{lem}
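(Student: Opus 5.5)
The plan is the classical Khasminskii stopping-time argument, applied to the inherent SDE obtained by multiplying \eqref{equa1} by the pseudo-inverse $A^-(t)$. The first step is to reduce to an SDE. By (A1.5), $P(t)=A^-(t)A(t)$ is constant, say $P$. Multiplying \eqref{equa1} by $A^-(t)$ then gives
\begin{equation*}
d(PZ(t))=A^-(t)f(t,Z(t))\,dt+A^-(t)g(t,Z(t))\,dW(t).
\end{equation*}
The index-1 property (Definition \ref{def1}) lets us recover the algebraic component $(I-P)Z(t)$ from $PZ(t)$ through the uniquely solvable constraint equations. Hence $Z$ is a continuous semimartingale whose generator acting on $V$ is exactly the operator $L$ of (A1.3). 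Note that $\operatorname{Im} g\subseteq \operatorname{Im} A$ guarantees that no noise is lost in this reduction. (The $Y$ in the definition of $\tau$ is read as $Z$.)

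The second step is localization. For each $q\ge 1$ set
\begin{equation*}
\tau_q:=\inf\{t\ge 0:\ Z(t)\notin D_q\}.
\end{equation*}
Since the $D_q$ are increasing with union $D$, the $\tau_q$ increase, and $\tau_q\uparrow\tau$ holds on $\{\zeta\in D\}$, which has probability one by (A1.4). On $[0,\tau_q]$ the coefficients are bounded by (A1.2) and Lipschitz by (A1.1) and Remark \ref{rem1}. Therefore the local solution exists up to $\tau_q$, and Itô's formula applies to $e^{-Mt}V(t,Z(t))$ without integrability issues. For fixed $T$ it yields
\begin{equation*}
\mathbb{E}\big[e^{-M(t\wedge\tau_q)}V(t\wedge\tau_q,Z(t\wedge\tau_q))\big]
=\mathbb{E}V(0,\zeta)+\mathbb{E}\int_0^{t\wedge\tau_q}e^{-Ms}\big(LV-MV\big)(s,Z(s))\,ds .
\end{equation*}
The integral term is $\le 0$ by (A1.3), and the stochastic integral has zero mean because its integrand is bounded up to $\tau_q$.

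The third step is to conclude. On $\{\tau_q\le T\}$ the point $Z(\tau_q)$ lies on $\partial D_q$, so $V(\tau_q,Z(\tau_q))\ge V_q(T)$. Combining this with the identity above gives
\begin{equation*}
e^{-MT}V_q(T)\,\mathbb{P}(\tau_q\le T)\le \mathbb{E}V(0,\zeta).
\end{equation*}
Letting $q\to\infty$ and using $V_q(T)\to\infty$ yields $\mathbb{P}(\tau\le T)\le\lim_q\mathbb{P}(\tau_q\le T)=0$. Since $T$ is arbitrary, $\tau=\infty$ almost surely.

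The main obstacle I expect is that $\mathbb{E}V(0,\zeta)$ may be infinite, since only $\zeta\in D$ a.s. is assumed. I would handle this by conditioning on $\mathcal F_0$, or equivalently by restricting to the events $\{\zeta\in D_k\}$. On such an event $V(0,\zeta)\le\sup_{[0,T]\times\overline{D_k}}V<\infty$ by continuity, provided the closure of $D_k$ lies in $D$, as is implicit in the Khasminskii setup. We then let $k\to\infty$. A secondary point is rigor in the reduction of the first step. It must be checked that the algebraic component stays consistent, so that $Z(t)\in D$ is governed by the full state. It must also be checked that the time-dependence of $A^-(t)$ creates no extra drift; this is exactly what $(A^-A)'=0$ ensures.
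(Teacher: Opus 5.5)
Your plan is the Khasminskii stopping-time argument. The paper gives no proof of its own here; it only defers to \cite[Lemma 3]{monarticle2}. The step on which your argument rests fails, however: $Z$ is \emph{not} a semimartingale whose It\^o drift for $V(t,Z(t))$ is $LV$.

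Multiplying by $A^-(t)$ yields only $d(PZ)=A^-f\,dt+A^-g\,dW$. The algebraic component $QZ(t)=\hat v(t,PZ(t))$ moves as well. So It\^o's formula for $V(t,Z(t))$ contains extra terms $\nabla V\cdot d(QZ)$, together with the associated second-order and cross-variation terms. Even writing these down needs $\hat v\in C^{1,2}$, which a merely locally Lipschitz $f$ does not give.

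Since $A^-f$ and $A^-g$ take values in $\operatorname{Im}P$, the operator $L$ differentiates $V$ only along $\operatorname{Im}P$. Hence $LV\le MV$ gives no control in the directions in which $QZ$ moves, and your identity for $\mathbb{E}[e^{-M(t\wedge\tau_q)}V(t\wedge\tau_q,Z(t\wedge\tau_q))]$ is unjustified. Choosing $V$ to depend on $Z$ only through $PZ$ would make the drift really equal $LV$, but it does not help. Such a $V$ is constant along each line $Z_0+\ker A$, and each such line meets $\partial D_q$ because $D_q$ is bounded. So $V_q(T)\to\infty$ cannot hold.

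The gap cannot be closed under Assumption~\ref{asum1} as written, because the statement is then false. Take $d=2$, $A=\mathrm{diag}(1,0)$ (so $A^-=P=A$), $g\equiv 0$, $f(z)=(z_1z_2,\;z_2-z_1)^T$, $D=\mathbb{R}^2$, $D_q$ the open balls of radius $q$, and $V(z)=1+z_2^2+\log(1+z_1^2)$. Then
\[
LV(z)=\frac{2z_1^2z_2}{1+z_1^2}\le 2|z_2|\le V(z),\qquad \inf_{\|z\|=q}V(z)\to\infty .
\]
Conditions (A1.1), (A1.2), (A1.4), (A1.5) and the index-1 property also hold: the constraint is $z_2=z_1$, and $J_{AE}$ even has a constant bounded inverse. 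Yet from $\zeta=(1,1)$ the solution obeys $z_2=z_1$ and $\dot z_1=z_1^2$. So $z_1(t)=1/(1-t)$, which explodes at $t=1$, and $\tau=\lim_q\tau_q=1$.

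A correct version must run your argument on the inherent SDE for $u=PZ$,
\[
du=A^-f(t,u+\hat v(t,u))\,dt+A^-g(t,u+\hat v(t,u))\,dW ,
\]
with a Lyapunov function of $u$ and a Khasminskii condition stated for this SDE. One then passes from $u$ to $Z=u+\hat v(t,u)$ using continuity and growth of $\hat v$. For example, assume the monotonicity form of Remark~\ref{rem34} and global Lipschitz continuity of $\hat v$ (which the bounded inverse of $J_{AE}$ in Theorem~\ref{theo1} provides). Then It\^o's formula for $1+\|u\|^2$ gives a drift bounded by $2M(1+\|Z\|^2)\le C(1+\|u\|^2)$.

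Your treatment of a possibly infinite $\mathbb{E}V(0,\zeta)$ via the events $\{\zeta\in D_k\}$ is fine and carries over to that setting.
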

The proof of this lemma is similar to that of \cite[Lemma 3]{monarticle2}. 

			We denote by  $J_{AE}(t,Z)=A(t)+R(t)f_Z(t,Z)$,   $t\in \left[0,T\right] $ and $Z\in D$, the Jacobian matrix of the constraint equation with $R(t)$ the projector matrix associated to $A(t)$,  $t\in \left[0,T \right]$. See \cite{Renate, serea2025existence,monarticle2} for more explanations. 
	
			\begin{theo}\label{theo1}
				Assume  that the equation \eqref{equa1}  is an index 1 SDAEs and the Jacobian matrix $J_{AE}(\cdot,\cdot)$   possesses a global bounded  inverse for any $Z\in D$. Suppose also that Assumption \ref{asum1} is satisfied.
				Then a unique process $Z(\cdot)$  solution of \eqref{equa1} exists. 
			\end{theo}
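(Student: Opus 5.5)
The plan is to reduce the index-1 SDAE \eqref{equa1} to a stochastic differential equation for the ``differential'' component, coupled with an algebraic equation for the ``algebraic'' component. We then use the classical Khasminskii/localization argument (Lemma \ref{lemaw}) to pass from local to global existence.

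\textbf{Step 1 (decoupling).} Let $P(t)=A^-(t)A(t)$ and $R(t)=I-A(t)A^-(t)$ be the projectors. By (A1.5), $P$ is constant; write $P=P(0)$ and $Q=I-P$. Split the unknown as $Z=PZ+QZ=:u+v$. Multiplying \eqref{equa1} by $A^-(t)$ and using $A^-AA^-=A^-$ with $P'=0$ gives
\begin{equation*}
du(t)=A^-(t)f(t,u+v)\,dt+A^-(t)g(t,u+v)\,dW(t).
\end{equation*}
Multiplying \eqref{equa1} by $R(t)$ and using the index-1 property $\mathrm{Im}\,g\subseteq \mathrm{Im}\,A$ (so $Rg=0$, $RA=0$) gives the constraint $R(t)f(t,u+v)=0$. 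Consider the map $F(t,u,v):=A(t)v+R(t)f(t,u+v)$, whose Jacobian in $v$ restricted to $\mathrm{Im}\,Q$ is essentially $J_{AE}$. This map is globally uniformly invertible by hypothesis. The global implicit function theorem (Hadamard--L\'evy type) then yields a unique continuous map $v=\phi(t,u)$ solving the constraint. Its local Lipschitz continuity in $u$ follows from the bounded inverse of $J_{AE}$ together with (A1.1).

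\textbf{Step 2 (inherent SDE).} Substituting $v=\phi(t,u)$ gives an SDE in $u$ with coefficients
\begin{equation*}
\tilde f(t,u)=A^-(t)f(t,u+\phi(t,u)),\qquad \tilde g(t,u)=A^-(t)g(t,u+\phi(t,u)).
\end{equation*}
These coefficients are continuous and locally Lipschitz, by composition of (A1.1) with the Lipschitz bound on $\phi$ on bounded sets and boundedness of $A^-$ on $[0,T]$. We then apply the truncation of Remark \ref{rem1}: on each $D_q$ we replace $\tilde f,\tilde g$ by globally Lipschitz bounded $f_q,g_q$. Classical theory gives a unique strong solution $u_q$ of the truncated SDE. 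Define $Z_q=u_q+\phi(\cdot,u_q)$ and the stopping times $\tau_q=\inf\{t:Z_q(t)\notin D_q\}$. Pathwise uniqueness implies consistency, $Z_{q+1}=Z_q$ on $[0,\tau_q]$, so that $Z(t):=Z_q(t)$ for $t\le\tau_q$ defines a local solution up to $\tau=\lim_q\tau_q$.

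\textbf{Step 3 (non-explosion and uniqueness).} Apply It\^o's formula to $e^{-Mt}V(t,Z(t\wedge\tau_q))$. The generator acting on the dynamics is exactly $L$ from (A1.3), because $dZ$ projected by $P$ equals $A^-f\,dt+A^-g\,dW$. This gives
\begin{equation*}
\mathbb{E}V(0,\zeta)e^{MT}\ge \mathbb{P}(\tau_q\le T)\,V_q(T),
\end{equation*}
and hence $\mathbb{P}(\tau_q\le T)\to0$; this is precisely Lemma \ref{lemaw}, giving $\tau=\infty$ a.s. (here (A1.4) is used). Uniqueness follows by a standard localization argument. Two solutions share the same $v=\phi(t,u)$, so their $u$-parts solve the same locally Lipschitz SDE. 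Gronwall's inequality up to $\tau_q$ then gives equality, and we let $q\to\infty$.

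\textbf{Main obstacle.} The delicate step is Step 1: rigorously justifying the global solvability and local Lipschitz regularity of $\phi$ from the global bounded inverse of $J_{AE}$ when $f$ is only locally Lipschitz (hence perhaps not $C^1$), and checking that $\phi(t,u)$ keeps $Z$ in $D$. I would first assume $f_Z$ exists, as in the definition of $J_{AE}$, and use Hadamard's global inverse theorem on $v\mapsto F(t,u,v)$ restricted to $\mathrm{Im}\,Q$, following \cite{serea2025existence,monarticle2}.
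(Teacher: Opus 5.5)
Your overall architecture matches the proof the paper takes by reference to \cite{monarticle2}: constant projector $P$ from (A1.5), the splitting $Z=u+v$, global solvability of $A(t)v+R(t)f(t,u+v)=0$ from the bounded inverse of $J_{AE}$, localization of the inherent SDE for $u$, Khasminskii non-explosion (Lemma \ref{lemaw}), and Gronwall for uniqueness. Step 3, however, rests on a false claim.

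\textbf{The generator of $Z$ is not $L$.} $L$ carries only the drift $A^-f$ and diffusion $A^-g$, which are the coefficients of $du=P\,dZ$. The algebraic component $v(t)=\phi(t,u(t))$ also moves. If $\phi\in\mathcal{C}^{1,2}$, then
\begin{equation*}
dv=\Bigl(\phi_t+\phi_u A^-f+\tfrac12\,\mathrm{tr}\bigl(\phi_{uu}\,A^-g(A^-g)^T\bigr)\Bigr)dt+\phi_u A^-g\,dW .
\end{equation*}
It\^o's formula for $V(t,Z(t))$ picks these terms up through $V_z\,dv$ and the cross and second-order terms, and none of them appear in $LV$. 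For example, take the $V$ of Remark \ref{rem34} with orthogonal $P$. Then $\|Z\|^2=\|u\|^2+\|v\|^2$, but $LV$ accounts only for $d\|u\|^2$. So $LV\le MV$ does not make $e^{-Mt}V(t,Z(t\wedge\tau_q))$ a supermartingale, and your bound on $\mathbb{P}(\tau_q\le T)$ is unjustified. Worse, under the hypotheses of Theorem \ref{theo1} you get at best $\phi\in\mathcal{C}^1$, since $f\in\mathcal{C}^2$ belongs only to Assumption \ref{assum2}. Then $Z$ need not even be a semimartingale, and It\^o's formula for $V(t,Z(t))$ is not available.

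\textbf{How to repair it.} Run the Lyapunov/stopping-time argument on the inherent SDE for $u$ instead. Its generator is exactly $L$, with coefficients evaluated at $u+\phi(t,u)$, and this is the setting in which the Khasminskii argument behind Lemma \ref{lemaw} works. To match the exit condition on $\partial D_q$, (A1.3) must be read as a condition in the differential variable. For instance, let $V(t,Z)$ depend on $Z$ only through $PZ$; then $V_zQ=0$ because $PQ=0$, and the $v$-dynamics drop out. Also do this conditionally on $\mathcal{F}_0$, since (A1.4) only puts $\zeta$ in $D$ a.s. and $\mathbb{E}V(0,\zeta)$ may be infinite.

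Non-explosion then transfers from $u$ to $Z=u+\phi(t,u)$, because $\phi$ is globally Lipschitz, not merely locally. Differentiating $A\phi+Rf(t,u+\phi)=0$ gives $J_{AE}\phi_u=-Rf_Z=A-J_{AE}$, so $\phi_u=J_{AE}^{-1}A-I$, which is bounded by $\|J_{AE}^{-1}\|\,\|A\|_\infty+1$. This is where the bounded-inverse hypothesis really enters. It is also the source of the constant $L_{\hat v}$ used in Lemma \ref{lem2}.
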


			\begin{proof}
				For this proof, we refer the reader to the proof of \cite[Theorem 1]{monarticle2}.
			\end{proof}
            As mentioned in the introduction, it is generally difficult or even impossible to find the exact solution of SDAEs. Our novel numerical scheme is provided in the next section.

			\section{ Novel numerical scheme based on local linearization}\label{sec2}

            This section first discusses the challenges associated with using the explicit Euler Maruyama method to solve stochastic differential-algebraic equations, which arise due to the singularity of the matrix $A(\cdot)$  and the nonlinearity of the coefficients. In the second part, we develop a new semi-implicit linearization scheme to sample the solutions of  SDAEs of type \eqref{equa1}.
			
            
            The following assumptions are important for the main result in this section.
            \begin{assum}\label{assum2}   For a  given positive number $h>0$, we assume that
			\begin{enumerate}
            \item [(A2.1)] The  function $f$ is  twice continuously  differentiable and  the matrices $(A(t)-J(t,Z)h),~(A(t)+R(t)J(t,Z)) ,~Z\in \mathbb{R}^d, t\in \left[ 0,T\right]$ are  non singular matrices, where $R$ is a projector matrix associated to the matrix $A$ and $J$ is the Jacobian matrix of the function $f$ with respect to  $Z$. 
			\item[(A2.2)] The matrices $S^h(t,Z))=(A(t)-J(t,Z))h)^{-1}$, $(A(t)+R(t)J(t,Z))^{-1}$ and $J(t,Z),~Z\in \mathbb{R}^d, t\in \left[ 0,T\right]$ are bounded and Lipschitz with the variable $t$ by the constant $k_1$,
			\item [(A2.3)] There exists a constant $k_2$ such that for  $t_n, ~t_m\leq T$, we have  $$\left| \frac{\partial f}{\partial t}(t_n,Z(t_n))\right| \leq k_2, ~\left| \frac{\partial f}{\partial t}(t_n,Z(t_n))- \frac{\partial f}{\partial t}(t_m,Z(t_m))\right|\leq k_2\left|t_n-t_m\right|. $$
            Note that this assumption is not too  restrictive and was also used in \cite{jimenez2017weak}.
             \item  [(A2.4)] 	There exists a constant $k_3$ such that for  $t_n\leq T$,  we have $$\left\| D^2f(t_n,X_n)\right\|\leq  k_3,$$
            where $D^2f:=(f_{tt}, f_{tx}, f_{xt},f_{xx})$.
				\end{enumerate}
				 \end{assum}
               Throughout this work, for $N\in \mathbb{N}$, we denote by  $h=\frac{T}{N}$  the time step-size, $\Delta W_{n}=W(t_{n+1})-W(t_n)$, $t_{n+1}=t_n+ h=(n+1)h $ and $X_{n}\approx Z(t_{n}) ,~n=0,1,2,3,..., N-1$, with   $t_0=0$ and $t_N=T$.
			\subsection{Euler Maruyama method}
			 Applying the  Euler Maruyama scheme to the SDAEs \eqref{equa1} \footnote{
			as for SDEs in \cite{mao2015truncated,gyongy1998note,Renate,kelly2022adaptive}} yields
			\begin{equation}\label{equa6a}
				A(t_n)X_{n+1}=A(t_n)X_{n}+f(t_n,X_{n})h+g(t_n,X_n)\Delta W_{n}, ~X_0=\zeta.
			\end{equation}
            In addition to the stability issue from the explicit nature  of the scheme \eqref{equa6a}, the singularity issue of the matrix $A(t_n)$ makes the scheme heavy to implement as the constraints need to be eliminated.
			Our novel scheme based on semi-implicit and local linearization technique  presented in the next section,
            will be more competitive in terms of stability and efficiency, while approximating samples of the SDAEs \eqref{equa1}.
          \subsection{ Semi-implicit linearization scheme for the SDAEs}
            Basically, our novel technique based on local linearization involves the local decomposition of the drift term into linear and nonlinear components using Taylor's expansion. 
          Indeed we consider the  first-order  Taylor approximation of  $f(\cdot,\cdot)$
			\begin{equation}\label{equa36q}
				f(t,x)\approx f(t_n,x_n)+\frac{\partial f}{\partial x}(t_n,x_n)(x-x_n)+\frac{\partial f}{\partial t}(t_n,x_n)(t-t_n),
			\end{equation}
             $ t_n \leq t \leq t_{n+1}, \, x\in D$.
             Using the approximation  \eqref{equa36q}  in  \eqref{equa1} and following \cite{monarticle2}  yields for $n=0,1,2,...,N-1$ 
			\begin{align}\label{equa36a}
				A(t_n^N)(X_{n+1}-X_n)&=\left[ J(t_n^N,X_n)X_{n+1}+F(t_n^N, X_n)+\frac{\partial f}{\partial t}(t_n^N,X_n)h\right]h~~~~~~~~~~~~~~~~~~~\nonumber\\
                &+g(t_n^N,X_n)\Delta W_{n},
			\end{align}
			with
			$$J(t_n^N,X_n)=\dfrac{\partial f}{\partial X} (t_n^N,X_n )=:J_n,  \,\,\,\,F(t_n^N,X_n)=f(t_n^N,X_n)-J_nX_n.$$
             The scheme \eqref{equa36a} can be rewritten as
			\begin{align}\label{equa37a}
				(A(t_n^N)-J_nh)X_{n+1}&=A(t_n^N)X_{n} +F(t_n^N, X_n)h+\frac{\partial f}{\partial t}(t_n^N,X_n)h^2+g(t_n^N,X_n)\Delta W_{n}.
			\end{align}
			 By setting $S^h(t_n^N,X_n)=S^h_n=(A(t_n^N)-J_nh)^{-1},$ the scheme can be rewritten as 
			\begin{align}\label{equa38a}
				X_{n+1}&=S^h_nA(t_n^N)X_{n} +S^h_nF(t_n^N, X_n)h+S^h_n\frac{\partial f}{\partial t}(t_n^N,X_n)h^2
				+S^h_ng(t_n^N,X_n)\Delta W_{n}.
			\end{align}
			The corresponding  continuous-time approximation $X^N(t),~ t\in [t_n^N; t_{n+1}^N]$  with $X^N(t_n^N)=X_n$ is given by
			\begin{align}\label{equa39a}
				X^{N}(t)&=S^h_nA(t_n^N)X^{N}(t_n^N) +S^h_nF(t_n^N, X^N(t_n^N))(t-t_n^N)+S^h_n\frac{\partial f}{\partial t}(t_n^N,X^N(t_n^N))h(t-t_n^N)~~~~~~~~~~~~~~\nonumber\\
				&+S^h_ng(t_n^N,X^N(t_n^N))(W(t)-W(t_n^N)).
			\end{align}
We can also   be rewritten  \eqref{equa39a} as follows
 \begin{align*}
				X^{N}(t)&=S^h(k_N(t_n^N))A(k_N(t_n^N))X^{n}(k_N(t_n^N))\\ &+S^h(k_N(t_n^N))F(k_N(t_n^N), X^N(k_N(t_n^N)))(t-k_N(t_n^N))\\
                &+S^h(k_N(t_n^N))\frac{\partial f}{\partial t}(k_N(t_n^N),X^N(k_N(t_n^N)))h(t-k_N(t_n^N))\\
				&+S^h(k_N(t_n^N))g(k_N(t_n^N),X^N(k_N(t_n^N))(W(t)-W(t_n^N)),~t\in \left[ t_n^N, t_{n+1}^N\right] ,
			\end{align*} 
   with $k_N(t)=t_n^N=\frac{nT}{N},~ t\in \left[ t_n^N ,t_{n+1}^N\right)$. \\
   
   Since the $X^N(t)$ is only defined in $\left[ t_n^N ,t_{n+1}^N\right]$. For greater rigor, we define the continuous-time approximation on the interval $[0,T]$ as follows
   $$ X_N(t)=X^N(t),~ \text{if } t\in \left[ t_n^N ,t_{n+1}^N\right].$$
   
			In our analysis, it will be more natural to work with the following representation form \footnote{See   \cite{gyongy1996existence,higham2002strong, gyongy1998note, monarticle2} for more information.}
			\begin{align}\label{equa41}
				X_{N}(t)&=S^h_0A(t_0)\zeta +\int_{0}^{t}S^h(k_N(s))F(k_N(s), X_N(k_N(s)))ds~~~~~~~~~~~~~~~~~~~~~~~~~~\nonumber\\
               &+\int_{0}^{t}S^h(k_N(s))\frac{\partial f}{\partial t}(k_N(s), X_N(k_N(s)))hds\nonumber\\
              &+\int_{0}^{t}S^h(k_N(s))g(k_N(s),X_N(k_N(s)))dW(s),~~~t\in \left[ 0, T\right] ,
			\end{align}		
			where $k_N(t)=t_n^N=\frac{nT}{N},~ t\in \left[ t_n^N ,t_{n+1}^N\right)$. \\
			
			The following result is our main result.
			\begin{theo}\label{theo3} 
				Assume that Assumptions \ref{asum1} and \ref{assum2} hold. Then the approximation solution $X_N(t), ~~N\in \mathbb{	N}$   converges  in probability to the  exact solution $Z(t)$ of \eqref{equa1} almost surely,  uniformly in $t\in [0,T]$. Moreover,  for every $\alpha <\frac{1}{2}$ and $T>0$  there exists  a finite random variable $\epsilon$, such that:
				\begin{equation}\label{equa8a}
					\sup_{t\leq T}\left\| X_N(t)-Z(t)\right\|\leq \epsilon N^{-\alpha} .
				\end{equation} 
			\end{theo}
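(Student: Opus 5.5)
The proof follows the localization strategy of Gy\"ongy \cite{gyongy1998note} as adapted to SDAEs in \cite{monarticle2}. The idea is to reduce the locally Lipschitz problem to a family of globally Lipschitz problems via Remark \ref{rem1}, prove an $L^p$ rate for each truncated problem, and convert that rate into a pathwise one using Borel--Cantelli. The scheme itself is never truncated; the truncation is only an auxiliary device.

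\textbf{Step 1: truncated problems and stopping times.} For each $q\geq 1$ let $f_q,g_q$ be the bounded, globally Lipschitz modifications from Remark \ref{rem1}, which coincide with $f,g$ on $D_q$. Let $Z_q$ and $X_{N,q}$ be the exact solution and the scheme \eqref{equa41} built from $f_q,g_q$. Define the stopping times $\tau_q=\inf\{t: Z(t)\notin D_q\}$ and $\tau_{q,N}=\inf\{t: X_N(t)\notin D_q\}$. Then $Z=Z_q$ on $[0,\tau_q]$ and $X_N=X_{N,q}$ on $[0,\tau_{q,N}]$. By Lemma \ref{lemaw} and the Khasminskii condition (A1.3), $\mathbb{P}(\tau_q<T)\le \mathbb{E}V(0,\zeta)e^{MT}/V_q(T)\to 0$. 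Hence for almost every $\omega$ there is $q(\omega)$ with $\tau_{q(\omega)}>T$.

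\textbf{Step 2: moment estimate for the truncated problem.} Here I would show that for every $p\ge 2$,
\[
\mathbb{E}\sup_{t\le T}\|X_{N,q}(t)-Z_q(t)\|^p\le C_{p,q}N^{-p/2}.
\]
This is the main obstacle. The exact solution satisfies an index-1 relation: applying $A^-$ gives the differential component $P(t)Z$, and the constraint $R(t)f=0$ is solved through the invertible Jacobian $J_{AE}$. I would therefore decompose the error with the projectors $P=A^-A$ and $Q=I-P$ (using (A1.5)), and compare $S^h_nA(t_n)$ with $P$ and $S^h_nh$ with the corresponding solution operator.

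The key identity is $S^h_n(A(t_n)-J_nh)=I$. It gives
\[
S^h_nA(t_n)X_n+S^h_nF_nh=X_n+S^h_nf(t_n,X_n)h,
\]
so the scheme is really $A(t_n)(X_{n+1}-X_n)=\big(f(t_n,X_n)+J_n(X_{n+1}-X_n)\big)h+\dots$. The Taylor remainder $f(t,x)-[f(t_n,x_n)+J_n(x-x_n)+f_t(t-t_n)]$ is $O(|t-t_n|^2+\|x-x_n\|^2)$ by (A2.3)--(A2.4), which contributes order $h$ in $L^p$. The bounds on $S^h$, on $(A+RJ)^{-1}$ and on $J$ from (A2.2), together with their Lipschitz continuity in $t$, control the commutator terms. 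The algebraic component $Q(X_{n+1}-Z(t_{n+1}))$ is then estimated through the bounded inverse of $A+RJ$ (A2.1--A2.2) in terms of the differential component. For the differential component I would apply the Burkholder--Davis--Gundy inequality and the discrete Gronwall lemma; the Euler-type increments $Z(s)-Z(k_N(s))$ give $O(h^{1/2})$ in $L^p$, which yields the rate $N^{-1/2}$.

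\textbf{Step 3: pathwise rate.} For $\alpha<1/2$, Markov's inequality with $p$ large gives
\[
\sum_N \mathbb{P}\big(N^{\alpha}\sup_t\|X_{N,q}-Z_q\|>1\big)\le C\sum_N N^{p(\alpha-1/2)}<\infty .
\]
Borel--Cantelli (as in \cite{kloeden2007pathwise}) then gives a finite random variable $\epsilon_q$ with $\sup_t\|X_{N,q}-Z_q\|\le \epsilon_qN^{-\alpha}$. The estimate transfers from the truncated schemes back to $X_N$ as follows. On $\{\tau_q>T\}$, the path of $Z$ stays in a compact subset of $D_q$, at positive distance from $\partial D_q$. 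Since $X_{N,q}\to Z_q$ uniformly almost surely, for $N$ large $X_{N,q}$ also stays in $D_q$, so $\tau_{q,N}>T$ and $X_N=X_{N,q}$. Setting $\epsilon=\epsilon_{q(\omega)}$, enlarged to cover the finitely many small $N$, gives \eqref{equa8a}, and convergence in probability follows.
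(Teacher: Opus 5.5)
Your overall architecture is the paper's: truncation via Remark \ref{rem1}, an $L^p$ rate $N^{-1/2}$ for the truncated scheme, Markov plus Borel--Cantelli, and transfer back through the stopping times. The paper delegates that last step to Gy\"ongy (p.~210); you spell it out. The genuine difference is how you obtain the truncated rate.

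The paper (Lemma \ref{exis}) never compares with $Z^q$ directly. It applies It\^o's formula to $\|X^q_N-X^q_M\|^2$ using the representation \eqref{equa41}. It controls the terms with the moment bound of Lemma \ref{lem2}, the regularity of Lemma \ref{theo3a} and the weight $e^{-p\|\zeta\|}$, and gets $\mathbb{E}\sup_{s\le t}e^{-p\|\zeta\|}\|X^q_N(s)-X^q_M(s)\|^{2p}\le C(N^{-p}+M^{-p})$. It then concludes that $(X^q_N)_N$ is Cauchy in probability, identifies the limit with $Z^q$ by passing to the limit in the linearized drift, and lets $M\to\infty$.

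You compare with $Z_q$ directly through the $P/Q$ splitting. The differential part is an Euler step plus an $O(h^{3/2})$ implicit correction, handled by BDG and discrete Gronwall. The algebraic part is a Newton-type linearization of $Rf=0$ whose defect is the Taylor remainder, controlled through $(A+RJ)^{-1}$. This is the structure of Lemmas \ref{lem3} and \ref{lemma4}; the latter is never invoked in Lemma \ref{exis}.

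What each route buys: the Cauchy argument gives existence of the limit as a by-product and avoids any explicit consistency analysis of the constraint. Your route uses the index-1 structure explicitly. Your identity $S^h_nA(t_n)X_n+S^h_nF_nh=X_n+S^h_nf(t_n,X_n)h$ removes the term $J_nX_n$, which is what makes the paper's $F_q=f_q-J_qX$ grow linearly. So your truncated error recursion involves only bounded quantities, and neither Lemma \ref{lem2} nor the weight $e^{-p\|\zeta\|}$ should be needed.

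\textbf{Caution on $S^h$ in Step 2.} Do not use a bound on $S^h$ that is uniform in $h$, even though (A2.2) is phrased that way. For $0\neq x\in\ker A(t_n)$ one has $(A(t_n)-hJ_n)x=-hJ_nx$, so $\|S^h_n\|\ge 1/(h\|J_n\|)$; this is incompatible with bounded $J$ as $h\to0$. With $R=I-AA^-$ one checks $A-hJ=(AA^--hR)(A+RJ-hAA^-J)$. Hence:
\begin{itemize}
\item $hS^h=(A+RJ-hAA^-J)^{-1}(hAA^--R)$;
\item $S^hA=(A+RJ-hAA^-J)^{-1}A$;
\item since $Rg=0$, $S^hg=(A+RJ-hAA^-J)^{-1}g$.
\end{itemize}
These three are uniformly bounded by (A2.2) and a Neumann series for small $h$, and they are the only combinations your estimates should use. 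Note also that $S^h_nA(t_n)$ tends to the projector $(A+RJ)^{-1}A$ along $\ker A$, not to $P=A^-A$. So your ``comparison with $P$'' must go through the identity above, not through closeness of the matrices. This is exactly where your projector-based route is sturdier than the paper's It\^o-formula argument on \eqref{equa41}, which uses $|S^h|_F\le k_1$ uniformly in $N$.

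\textbf{Caution on Step 1.} The quantity $\mathbb{E}V(0,\zeta)$ may be infinite, since only (A1.4) is assumed on $\zeta$. You do not need it: Lemma \ref{lemaw} together with continuity of $Z$ on $[0,T]$ already gives $q(\omega)$ with $\tau_{q(\omega)}>T$ almost surely.
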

            The proof of this theorem is technically  heavy and therefore requires several intermediate results.
            \subsubsection{Preliminary results for Theorem \ref{theo3}}
Note that SDAEs can always be decomposed into a stochastic differential equations (SDEs) and a system of algebraic constraint equations (AEs). This implies that the solution $X$ of the SDAEs can be written as $X=u+\hat{v}(u)$, where $u$ is the solution of the associated SDEs and $\hat{v}$ denotes the implicit function solution of the algebraic equations. \\

The central preliminary result  is  the  following equivalence result.
\begin{lem}\label{lem3} Assume that the matrix $A(t_n^N)+R(t_n^N)J_n,n\leq N\in\mathbb{N}$ is non singular.  
	Then the scheme defined in equation \eqref{equa36a} is equivalent to the following scheme
	\begin{equation}\label{equa5bf}
		\left \{
		\begin{array}{c c c}
			u_{n+1}-u_n &=&P'(t_n^N)\left[ u_{n+1}+\hat{v}_N(t_{n+1}^N,u_{n+1})\right] h~~~~~~~~~~~~~~~~~~~~~~~~~~~~~~~~~~~~~~~~~~\\
            \\
            &&+ A^-(t_n^N) J_n\left[u_{n+1}+\hat{v}_N(t_{n+1}^N,u_{n+1})\right]h~~~~~~~~~~~~~~~~~~~~~~~~~~~~~~~~~~~~\\
			\newline\\
			&&+A^-(t_n^N)(F(t_n^N,u_{n}+\hat{v}_N(t_{n}^N,u_{n}))h~~~~~~~~~~~~~~~~~~~~~~~~~~~~~~~~~~~~~~ \\
            \\
            &&+A^-(t_n^N)\frac{\partial f}{\partial t}(t_n^N, u_{n}+\hat{v}_N(t_{n}^N,u_{n}))h^2)~~~~~~~~~~~~~~~~~~~~~~~~~~~~~~~~~~~~~~\\
            \newline\\
            &&+A^-(t_n^N)g(t_n^N,u_{n}+\hat{v}_N(t_{n}^N,u_{n}))\Delta W_n,~~~~~~~~~~~~~~~~~~~~~~~~~~~~~~~~~~~~~\\
			\newline\\
           \hat{v}_N(t_{n+1}^N,u_{n+1})&=&-(A(t_n^N)+R(t_n^N)J_n)^{-1}\left[R(t_n^N)J_nu_{n+1}\right.~~~~~~~~~~~~~~~~~~~~~~~~~~~~~~~~~~~\\
           \newline\\
           &&\left.+R(t_n^N)(F(t_n^N,u_{n}+\hat{v}_N(t_{n}^N,u_{n}))\right.~~~~~~~~~~~~~~~~~~~~~~~~~~~~~~~~~~~~~~~~~~\\
           &&\left. + \frac{\partial f}{\partial t}(t_n^N, u_{n}+\hat{v}_N(t_{n}^N,u_{n}))h)\right],~~~~~~~~~~~~~~~~~~~~~~~~~~~~~~~~~~~~~~~~~~~~\\
                      \newline\\
              u_0&=&P(0)\zeta,~~~~~~~~~~~~~~~~~~~~~~~~~~~~~~~~~~~~~~~~~~~~~~~~~~~~~~~~~~~~~~~~~~~~~~~~~\\
              \hat{v}_N(0,u_0)&=&Q(0)\zeta ,~~~~~~~~~~~~~~~~~~~~~~~~~~~~~~~~~~~~~~~~~~~~~~~~~~~~~~~~~~~~~~~~~~~~~~~~~\\
               \newline\\
			 X_{n+1}&=&u_{n+1} +\hat{v}_N(t_{n+1}^N,u_{n+1}),	~ n=0,1,...,N-1,~N\in\mathbb{N},~~~~~~~~~~~~~~~~~~~
		\end{array}\right. 
	\end{equation}
	where the  matrix $A^{-}(t)$ is the pseudo-inverse matrix of the matrix $A(t)$ and,   $P(t)$  and $R(t)$ are projectors matrices associated to $A(t)$,  $t\in \left[0,T \right] $ ( See \cite{serea2025existence} for more information about the matrix R).
\end{lem}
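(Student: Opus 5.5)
The plan is to split the scheme \eqref{equa36a} with the projectors attached to $A(t)$. I will use the standard identities for index-1 SDAEs (as in \cite{serea2025existence,monarticle2}). Here $P(t)=A^-(t)A(t)$ and $Q(t)=I-P(t)$. $R(t)$ is the projector along $\mathrm{Im}A(t)$, so $R(t)A(t)=0$ and $A^-(t)(I-R(t))=A^-(t)$. Finally $A(t)A^-(t)=I-R(t)$ on $\mathbb{R}^d$. For the iterates I set $u_n:=P(t_n^N)X_n$ and $\hat v_N(t_n^N,u_n):=Q(t_n^N)X_n$, so that $X_n=u_n+\hat v_N(t_n^N,u_n)$ trivially. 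Then $u_0=P(0)\zeta$ and $\hat v_N(0,u_0)=Q(0)\zeta$. By (A1.5), $P$ is in fact constant in $t$. I keep the term $P'(t_n^N)$ in the statement, since it vanishes, but the $P(t_{n+1})$ versus $P(t_n)$ bookkeeping is consistent either way.

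For the differential part, I multiply \eqref{equa36a} on the left by $A^-(t_n^N)$. Using $A^-A=P$, the left side becomes $P(t_n^N)X_{n+1}-P(t_n^N)X_n$. I write this as $u_{n+1}-u_n-(P(t_{n+1}^N)-P(t_n^N))X_{n+1}$. The last difference is $P'(t_n^N)X_{n+1}h$ to first order, and it is identically zero under (A1.5). The right side gives exactly the $A^-(t_n^N)J_nX_{n+1}h$, $A^-F h$, $A^-\partial_tf\,h^2$ and $A^-g\,\Delta W_n$ terms once $X_n$ and $X_{n+1}$ are replaced by $u+\hat v_N$. This yields the first line of \eqref{equa5bf}.

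For the constraint part, I multiply \eqref{equa36a} by $R(t_n^N)$. Since $RA=0$, and the index-1 condition $\mathrm{Im}\,g\subseteq \mathrm{Im}A$ gives $Rg=0$, I get
\[
0=R(t_n^N)\Big[J_nX_{n+1}+F(t_n^N,X_n)+\tfrac{\partial f}{\partial t}(t_n^N,X_n)h\Big].
\]
I substitute $X_{n+1}=u_{n+1}+\hat v_{n+1}$ with $\hat v_{n+1}=Q\hat v_{n+1}$. Then I add $A(t_n^N)\hat v_{n+1}$ to both sides; this term is zero because $AQ=0$. The result is $(A(t_n^N)+R(t_n^N)J_n)\hat v_{n+1}=-R(t_n^N)[J_nu_{n+1}+F+\partial_tf\,h]$. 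Inverting via the nonsingularity hypothesis gives the formula for $\hat v_N(t_{n+1}^N,u_{n+1})$.

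For the converse, I take $(u_n,\hat v_n)$ solving \eqref{equa5bf} and put $X_{n+1}=u_{n+1}+\hat v_{n+1}$. I then recover \eqref{equa36a} by forming $A(t_n^N)\times$(first equation) $+\,R(t_n^N)\times$(constraint relation rewritten before inversion). This uses $AA^-+R=I$ and $AP=A$, $AQ=0$. I also need that the $\hat v$ defined by the inverse formula indeed lies in $\mathrm{Im}\,Q$, so that $u$ and $\hat v$ are the genuine projections. That follows by applying $P$ to $(A+RJ_n)\hat v=-R[\cdot]$: $A^-R=0$ gives $P\hat v=0$, hence $\hat v=Q\hat v$.

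The main obstacle I expect is this projector bookkeeping. Specifically, I need to check that $(A+RJ_n)^{-1}R[\cdot]$ lands in $\mathrm{Im}\,Q$, and to handle the time-dependence of $P$ and $Q$ between $t_n$ and $t_{n+1}$. I would settle the first using the specific choice of $R$ from \cite{serea2025existence}, and the second using (A1.5).
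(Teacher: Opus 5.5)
Your proposal is correct and follows essentially the paper's route. The paper splits with the projectors $P,Q,R$: multiplying by $A^-(t_n^N)$ gives the differential part, multiplying by $R(t_n^N)$ gives the constraint (where the index-1 condition $Rg=0$ removes the noise), and invertibility of $A(t_n^N)+R(t_n^N)J_n$ solves for $\hat v$. You do the same directly on the discrete recursion rather than through the continuous form \eqref{sdae} and Lemma 7 of \cite{monarticle2}, and you make the converse and the role of (A1.5) explicit.

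One small slip: to get $P\hat v=0$ you must apply $A^-(t_n^N)$, not $P$, to $(A+RJ_n)\hat v=-R[\cdot]$. That gives $A^-A\hat v=P\hat v$ while $A^-R=0$ kills the remaining terms.
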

\begin{proof}
The continuous form of the equation \eqref{equa36a} can be defined as
\begin{align}\label{sdae}
 A(k_N(t))dX_N(t)&=[J_n(k_N(t))X_N(t)+F(k_N(t),X_N(k_N(t))]dt\nonumber\\
 &+\frac{\partial f}{\partial t}(k_N(t),X_N(k_N(t)))hdt\nonumber\\
 &+g(k_N(t),X_N(k_N(t)))dW(t)   ,~~
 X_N(0)=\zeta; t\in [0,T].
\end{align}
From \cite[Lemma 5]{monarticle2} there exist the  projector matrices $Q(t)$, $R(t)$ and $P(t)$ such that $ImQ(t)=KerA(t),  ~ R(t)A(t)=0_{d\times d}, \text{ and } Q(t)=I-P(t),~ t\in [0,T]$ and then we have this projection
\begin{eqnarray}
	\left \{
	\begin{array}{l}
		X_N(t)=P(t)X_N(t)+Q(t)X_N(t)=u_N(t)+v_N(t),\\
		\newline\\
		 u_N(t)\in \text{Im}P(t), \quad v_N(t)\in \text{Im}Q(t) ,\quad t\in\left[0,T \right].
	\end{array}\right. 
       \end{eqnarray}
      The remainder of the proof follows a similar approach to that used in \cite[Lemma 7]{monarticle2}.
\end{proof}
\begin{lem}\label{lem2}
	Let Assumptions \ref{asum1} and \ref{assum2} be satisfied.  Consider the solution $X_N^q(.)$ of equation \eqref{equa36a} or \eqref{equa41}, where the coefficients $f$ and $g$ are replaced by the functions $f_q$ and $g_q$ respectively, as defined in the Remark \ref{rem1}. Then the following inequality holds
	$$\mathbb{	E}\left\|  X_N^q(s)\right\|^{p}\leq C, ~s\in \left[0, T\right] , ~p\geq 1, ~C\text{ is a constant independent to N.} ~~~~~~~~~~~~~~~~~~~~~~~~~~~~~~~~~~~~~~$$
\end{lem}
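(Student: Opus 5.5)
We prove the bound by moment estimates on the integral representation \eqref{equa41}, written with $f_q,g_q$ in place of $f,g$, followed by a discrete Gronwall argument. The truncated coefficients are globally bounded and globally Lipschitz by Remark \ref{rem1}. Assumption \ref{assum2} (A2.2)–(A2.3) gives uniform bounds $\|S^h(t,Z)\|\le K$ and $\|J(t,Z)\|\le K$ over $t\in[0,T]$, $Z\in\mathbb{R}^d$, and $|\partial_t f|\le k_2$. None of these constants depends on $N$.

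First, fix $p\ge 2$; the case $1\le p<2$ then follows from Hölder's inequality. Taking norms in \eqref{equa41} and using $\|a+b+c+d\|^p\le 4^{p-1}(\|a\|^p+\|b\|^p+\|c\|^p+\|d\|^p)$, we bound the four terms separately:
\begin{itemize}
\item the initial term is at most $K^p\|A\|_\infty^p\,\mathbb{E}\|\zeta\|^p$, which is finite if we assume $\zeta$ has finite $p$-th moments (implicit in the setting);
\item for the drift term, $F_q(t,X)=f_q(t,X)-J(t,X)X$, so $\|F_q(t,X)\|\le L_q+K\|X\|$, and Hölder in time gives a bound $C\big(1+\int_0^t\mathbb{E}\|X^q_N(k_N(s))\|^p\,ds\big)$;
\item the $\partial_t f$ term carries an extra factor $h\le T$ and is at most $(KTk_2)^p$;
\item for the stochastic integral, the Burkholder–Davis–Gundy inequality, or simply the It\^o isometry combined with the moment inequality for stochastic integrals, bounds it by $C_p\,\mathbb{E}\big(\int_0^t |S^h g_q|_F^2\,ds\big)^{p/2}\le C_p (K L_q)^p T^{p/2}$, because $g_q$ is bounded.
\end{itemize}

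Combining these, we get
$$\mathbb{E}\|X_N^q(t)\|^p\le C_1+C_2\int_0^t \mathbb{E}\|X_N^q(k_N(s))\|^p\,ds ,$$
with $C_1,C_2$ depending on $p,q,T,K,k_2,L_q$ and $\mathbb{E}\|\zeta\|^p$, but not on $N$. We define $\phi(t)=\sup_{s\le t}\mathbb{E}\|X^q_N(s)\|^p$. Since $k_N(s)\le s$, this yields $\phi(t)\le C_1+C_2\int_0^t\phi(s)\,ds$.

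To apply Gronwall we need $\phi$ to be finite a priori for each fixed $N$. This follows by induction over the finitely many steps of \eqref{equa38a}: each $X_{n+1}$ is an affine combination of $X_n$, bounded terms, and $S^h_n g_q\Delta W_n$, with bounded matrix coefficients, so every moment is finite. Gronwall's inequality then gives $\phi(T)\le C_1e^{C_2T}=:C$.

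The main obstacle is the implicit factor $J_nX_{n+1}$ hidden inside $S^h_n$ and $F$. The argument relies on the uniform boundedness of $S^h$ in (A2.2) holding uniformly in $h$, so that $S^h_nA(t_n)$ stays bounded independently of $N$. This boundedness has to be taken directly from Assumption \ref{assum2}, because the singularity of $A$ gives no contraction that could be exploited instead. A second point to check is that $J$ must be bounded along the truncated process. If $J$ is only bounded on $D_q$, we replace it by the Jacobian of $f_q$, which is bounded by $L_q$ from the Lipschitz property, and the argument goes through unchanged.
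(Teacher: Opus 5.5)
There is a genuine gap, and it is exactly at the point you call the main obstacle. Your drift bullet uses $\|S^h(k_N(s))F_q\|\le K(L_q+K\|X\|)$ with $K$ independent of $N$. That requires $\sup_h\sup_{t,Z}\|S^h(t,Z)\|<\infty$, and you cannot take this from (A2.2), because it is incompatible with $A(t)$ being singular and $J$ being bounded. Take $v\in\ker A(t)$ with $\|v\|=1$. Then $(A(t)-Jh)v=-hJv$, so
\[
1=\|v\|=\|S^h(A(t)-Jh)v\|\le h\,\|S^h\|\,\|J\|,\qquad\text{i.e.}\qquad \|S^h\|\ge \frac{1}{h\|J\|}.
\]
This leaves two readings of (A2.2), and neither helps you:
\begin{itemize}
\item Read uniformly in $h$, it can only hold for $h\ge k_1^{-2}$, so your argument is vacuous.
\item Read for fixed $h$, your $C_2$ is of order $h^{-p}$ and Gronwall gives $e^{Ch^{-p}T}$, which is not independent of $N$.
\end{itemize}

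This is not just a crude estimate. In the index-1 setting, $S^hA=I+hS^hJ$ and $S^hg=S^hA\,A^-g$ do stay bounded, using $\operatorname{Im}g\subseteq\operatorname{Im}A$. However, $hS^hR\to-(A+RJ)^{-1}R$ as $h\to0$. So in the directions of $\operatorname{Im}R$, the drift changes the algebraic component by $O(1)$ at every step, not by $O(h)$. A norm-based Gronwall on the full vector $X$ cannot tell this step-by-step re-solution of the linearized constraint apart from an increment that accumulates over $N$ steps.

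A side point: \eqref{equa41}, on which you build, is not the telescoped form of \eqref{equa38a}. Since $S^h_nA(t_n)=I+hS^h_nJ_n$, the scheme actually telescopes to $X_{n+1}=X_n+S^h_n[f(t_n,X_n)h+\partial_t f(t_n,X_n)h^2+g(t_n,X_n)\Delta W_n]$. In either form, the factor $hS^h_n$ produces the same $O(1)$ effect.

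The missing idea is the one the paper's proof uses: never estimate $X_N^q$ through $S^h$. Instead, use the splitting of Lemma \ref{lem3}, $X_N^q(t)=u_N^q(t)+\hat v_N(t,u_N^q(t))$ with $u_N^q=PX_N^q$. The global Lipschitz property of $\hat v_N$ in $u$ has constant $L_{\hat v}$, which comes from the $h$-independent bound on $(A+RJ)^{-1}$. It gives
\[
\mathbb{E}\|X_N^q(t)\|^p\le 2^{p-1}\Big((1+L_{\hat v})^p\,\mathbb{E}\|u_N^q(t)\|^p+\|\hat v_N(t,0)\|^p\Big).
\]
The moment and Gronwall argument is then run only on the differential component $u_N^q$. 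Its recursion in Lemma \ref{lem3} involves only $h$-independent matrices such as $A^-$ and $A^-J_n$, and every implicit term carries a factor $h$. In this route the singular direction is handled by $(A+RJ)^{-1}$, which can legitimately be bounded uniformly in $h$, rather than by $S^h$, which cannot.
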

          \begin{proof}	
          We use the projection defined in the previous lemma, which means that $X_N^q(t)=u_N^q(t)+\hat{v}_N(t, X_N^q(t))$ and we have
          \begin{align*}
              \mathbb{	E}\left\|X_N^q(t)\right\|^{p}&=\mathbb{	E}\left\|u_N^q(t)+\hat{v}_N(t, u_N^q(t))\right\|^{p}\\
              &\leq \mathbb{	E}\left\|u_N^q(t)+\hat{v}_N(t,0)+\hat{v}_N(t, u_N^q(t))-\hat{v}_N(t,0)\right\|^{p}\\
               &\leq \mathbb{	E}(\left\|u_N^q(t)\right\|+\left\|\hat{v}_N(t,0)\right\|+\left\|\hat{v}_N(t, u_N^q(t))-\hat{v}_N(t,0)\right\|)^{p}\\
               &\leq 2^{p-1}((1+L_{\hat{v}})^p\mathbb{	E}\left\|u_N^q(t)\right\|^p+\left\|\hat{v}_N(t,0)\right\|^p).
          \end{align*}
				The remaining steps of the proof are similar to those in \cite[Lemma 8]{monarticle2}.
			\end{proof}

                  The next lemma is a regularity result.      
				\begin{lem}\label{theo3a}
				Let $(X_N^q(t))_{t\in \left[ 0,T \right] }$ be a solution of $\eqref{equa41}$  and $Z^q(t)$ the solution of equation (\ref{equa1}) where the coefficients $f(\cdot,\cdot)$ and $g(\cdot,\cdot)$ are replaced by $f_q(\cdot,\cdot)$ and $g_q(\cdot,\cdot)$ defined in Remark \ref{rem1}. Then
				for all $p\geq 1$, there exists a  positive constant  $\hat{M}$ such that
				\begin{align}\label{equa39}
					\mathbb{E}\left[\left\|X_N^q(t)-X^q_N(t_n) \right\|^p \right]\leq 					\hat{M}\left| t-t_n\right|^{p/2} , t\in [t_n,t_{n+1}),\text{ and }\nonumber\\
                    \nonumber\\
                    \mathbb{E}\left[\left\|Z^q(t)-Z^q(t_n) \right\|^p \right]\leq 					\hat{M}\left| t-t_n\right|^{p/2} , t\in [t_n,t_{n+1}),
				\end{align}
                with $n\leq N\in \mathbb{N}$.
			\end{lem}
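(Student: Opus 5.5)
We prove the two estimates in \eqref{equa39} separately. For the numerical solution we work with the local one-step form \eqref{equa39a}, not with the integral form \eqref{equa41}. For the exact solution we use the decomposition $Z^q=u^q+\hat v(t,u^q)$ into an SDE part and an algebraic part.

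\emph{Step 1: the numerical solution.} Fix $t\in[t_n,t_{n+1})$ and write $X^q_N(t_n)=X_n$. By \eqref{equa39a},
\begin{align*}
X^q_N(t)-X_n &= (S^h_nA(t_n)-I)X_n + S^h_nF_q(t_n,X_n)(t-t_n) \\
&\quad + S^h_n\tfrac{\partial f_q}{\partial t}(t_n,X_n)h(t-t_n) + S^h_ng_q(t_n,X_n)(W(t)-W(t_n)).
\end{align*}
The first term does not depend on $t-t_n$, so it has to be shown to vanish or to be small. We use the identity $S^h_n(A(t_n)-J_nh)=I$, which gives
\[
S^h_nA(t_n)-I=S^h_nJ_nh .
\]
Hence the first term equals $S^h_nJ_nX_nh$. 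Grouping it with the $F_q$ term, note that $F_q(t_n,X_n)=f_q(t_n,X_n)-J_nX_n$. Strictly speaking, at $t=t_n$ the continuous interpolation should reduce to $X_n$. So we interpret \eqref{equa39a} consistently with the integral form \eqref{equa41}, in which no jump appears. In that form the increment is the sum of a drift integral over $[t_n,t]$ and a stochastic integral over $[t_n,t]$:
\[
X^q_N(t)-X^q_N(t_n)=\int_{t_n}^t S^h(k_N(s))\Big(F_q+h\tfrac{\partial f_q}{\partial t}\Big)(k_N(s),X_N(k_N(s)))\,ds+\int_{t_n}^t S^h(k_N(s))g_q(\cdot)\,dW(s).
\]

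\emph{Step 2: moment bounds for the numerical increment.} By (A2.2), $S^h$ and $J$ are uniformly bounded. The truncated coefficients $f_q,g_q$ are bounded by $L_q$ (Remark \ref{rem1}), and $\partial_t f$ is bounded by (A2.3). Then $\|F_q(t_n,X_n)\|\le L_q+k_1\|X_n\|$. Applying H\"older's inequality to the drift integral and the Burkholder--Davis--Gundy inequality to the stochastic integral gives
\[
\mathbb{E}\|X^q_N(t)-X_n\|^p\le C\big(1+\mathbb{E}\|X_n\|^p\big)|t-t_n|^p+C|t-t_n|^{p/2}.
\]
Lemma \ref{lem2} bounds $\mathbb{E}\|X_n\|^p$ uniformly in $N$. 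Since $|t-t_n|\le T$, we have $|t-t_n|^p\le T^{p/2}|t-t_n|^{p/2}$, which yields the first estimate in \eqref{equa39}.

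\emph{Step 3: the exact solution.} Write $Z^q=u^q+\hat v(t,u^q)$, where $u^q=PZ^q$ solves the inherent SDE
\[
du=\big(P'(u+\hat v)+A^-f_q\big)dt+A^-g_q\,dW.
\]
All coefficients here have at most linear growth, because $f_q,g_q$ are bounded and $P',A^-$ are continuous on $[0,T]$. Moment bounds for $u^q$ follow from Gronwall's inequality, and the same H\"older/BDG argument as in Step 2 gives $\mathbb{E}\|u^q(t)-u^q(t_n)\|^p\le C|t-t_n|^{p/2}$. For the algebraic part, the implicit function theorem and the bounded inverse of $A+RJ$ in (A2.2) make $\hat v$ Lipschitz in $u$ and, via (A2.2)--(A2.3), Lipschitz in $t$. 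Therefore
\[
\|\hat v(t,u^q(t))-\hat v(t_n,u^q(t_n))\|\le C\big(|t-t_n|+\|u^q(t)-u^q(t_n)\|\big),
\]
and the second estimate in \eqref{equa39} follows with $|t-t_n|^p\le T^{p/2}|t-t_n|^{p/2}$.

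The main obstacle is the algebraic component. Deriving Lipschitz continuity of $\hat v$ in $t$ requires differentiating the constraint $R(t)f_q(t,u+v)=0$ in $t$, and this needs the time regularity of $R$, of $f$ (through (A2.3)--(A2.4)), and of $Q$.
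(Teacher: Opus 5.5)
Your Step~2, the $u^q$-part of Step~3 and the Lipschitz bound for $\hat v$ in $u$ are fine. The overall route (H\"older/BDG on the increment of \eqref{equa41}, then $Z^q=u^q+\hat v(t,u^q)$ with the inherent SDE for $u^q$) is more explicit than the paper, which only points to \cite[Lemma 9]{monarticle2}.

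The step that is not established is the time-Lipschitz bound for $\hat v$. You attribute it to (A2.2)--(A2.3) and then leave it open, but it does not follow as you phrase it. Differentiating $R(t)f_q(t,u+v)=0$ alone produces $RJ\,\partial_t v$, and $RJ$ is singular. You must differentiate the full constraint $A(t)v+R(t)f_q(t,u+v)=0$, whose $v$-Jacobian is the invertible $A+RJ$:
\[
(A+RJ)\,\partial_t\hat v=-\left(A'\hat v+R'f_q+R\,\partial_t f_q\right).
\]
Since $\hat v$ grows linearly in $u$, the term $A'\hat v$ by itself only gives $C(1+\|u\|)|t-t_n|$, not your uniform $C|t-t_n|$. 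The missing observation is (A1.5). Since $P'=0$, $Q$ is constant, so $A(t)Q\equiv 0$ yields $A'(t)Q=0$. Because $\hat v\in\ker A(t)=\mathrm{Im}\,Q$, this gives $A'\hat v=0$, which also settles your concern about the regularity of $Q$. Alternatively, keep the factor $1+\|u^q(t_n)\|$ and absorb it with moment bounds for $u^q$.

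The remaining terms need $R\in C^1$ and a bound on $\partial_t f_q$ at the points $(s,u+\hat v(s,u))$, which lie off the exact trajectory. (A2.3) only controls $\partial_t f$ along $Z(t_n)$, so you have to read it as a global bound. The paper does the same tacitly when it applies $k_2$ at numerical points in Lemma~\ref{exis}.

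Separately, \eqref{equa41} is not a jump-free rewriting of \eqref{equa39a}. Your own identity $S^h_nA(t_n)=I+hS^h_nJ_n$ turns \eqref{equa38a} into $X_{n+1}=X_n+S^h_n[(f_q+h\partial_t f_q)(t_n,X_n)h+g_q(t_n,X_n)\Delta W_n]$. By contrast, \eqref{equa41} advances with $F_q=f_q-J_nX_n$ and starts from $S^h_0A(0)\zeta$, so the two recursions differ by $hS^h_nJ_nX_n$ at every step. Because the lemma is stated for \eqref{equa41}, your Step~2 proves what is claimed. The same argument also covers the genuine interpolant $X_n+S^h_n[(f_q+h\partial_t f_q)(t_n,X_n)(t-t_n)+g_q(t_n,X_n)(W(t)-W(t_n))]$, where the drift is even bounded. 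Your remark that \eqref{equa39a} taken literally jumps by $hS^h_nJ_nX_n$ at $t_n$ is correct, and for that object the first estimate would indeed fail.
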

			
			\begin{proof}	
				We can use a similar approach as in \cite[Lemma 9]{monarticle2}.
			\end{proof}
			\begin{lem}\label{lemma4}
         Let $\hat{v}$ the implicit function solution of the constraint equation from the SDAEs \eqref{equa1} and $\hat{v}_N$ its numerical approximation defined in Lemma \ref{lem3}.  Assume that Assumptions \ref{asum1} and \ref{assum2} are satisfied. Then the two time depending functions $u^q_N$ and $u^q$ defined on  $D=\cup_{q=1}^{\infty}D_q$, satisfy the following inequality
  \begin{align*}
       \|\hat{v}_N(t,	u^q_N(t))- \hat{v}(t,u^q(t))\|^2
    & \leq C\left(h^2+\left\|u^q_N(t)-u^q(t)\right\|^2\right.\nonumber\\
                       &\left.+\| u^q(t)-u^q(t_n)\|^2\right)  + C\left\|u^q_N(t)-u^q_N(k_N(t))\right\|^2\nonumber\\
                    &+C\left\|X^q_N(t)-Z^q(t)\right\|^2+Ch^2 \left\|X^q_N(k_N(t))\right\|^2\nonumber\\
                    &+Ch^2 \left\|u^q_N(t)\right\|^2.
    \end{align*} 
    C is the constant independent to $n\leq N  \in \mathbb{N}$ and $h=\frac{T}{N}$.
            \end{lem}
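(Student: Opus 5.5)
We compare the two constraint solutions directly through their defining equations. For the exact SDAE \eqref{equa1}, multiplying by the projector $R(t)$ (with $R(t)A(t)=0$) gives the algebraic constraint $R(t)f(t,u^q(t)+\hat v(t,u^q(t)))=0$, with $\hat v(t,u^q(t))=Q(t)Z^q(t)$. For the scheme, Lemma \ref{lem3} gives the explicit linearized relation
\[
(A(t_n)+R(t_n)J_n)\hat v_N = -R(t_n)\Bigl[J_n u^q_N + F(t_n,X^q_N(t_n)) + \tfrac{\partial f}{\partial t}(t_n,X^q_N(t_n))h\Bigr].
\]
We write the exact constraint in the same form. Adding $A(t_n)\hat v$ (noting that $A(t_n)Q(t_n)=0$ up to the time-dependence of $Q$, which (A1.5) controls), we obtain $(A(t_n)+R(t_n)J_n)\hat v = -R(t_n)\bigl[J_n u^q + f(t,Z^q(t)) - J_n Z^q(t)\bigr] + \text{(remainder)}$. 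Subtracting the two relations and applying the uniformly bounded inverse $(A(t_n)+R(t_n)J_n)^{-1}$ from (A2.2), we get
\[
\|\hat v_N-\hat v\| \le C\bigl\| R(t_n)\bigl[J_n(u^q_N-u^q) + f(t_n,X^q_N(t_n)) - J_nX^q_N(t_n) - f(t,Z^q(t)) + J_nZ^q(t) + \tfrac{\partial f}{\partial t}(t_n,X^q_N(t_n))h\bigr]\bigr\| + \text{(remainder)}.
\]

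Next we expand $f(t,Z^q(t))$ by Taylor's formula around $(t_n,X^q_N(t_n))$. The first-order terms $J_n(Z^q(t)-X^q_N(t_n))+\partial_t f\,(t-t_n)$ combine with the bracket. The second-order remainder is bounded via (A2.4) by $k_3\bigl(h^2+\|Z^q(t)-X^q_N(t_n)\|^2\bigr)$. We also use the local Lipschitz bound (A1.1) for $f_q$, which is available because we work with the truncated coefficients of Remark \ref{rem1}. This gives crude bounds of the form $L_q\|Z^q(t)-X^q_N(t_n)\|$. Splitting $Z^q(t)-X^q_N(t_n)$ as $(Z^q(t)-X^q_N(t)) + (X^q_N(t)-X^q_N(t_n))$ produces the term $\|X^q_N(t)-Z^q(t)\|^2$. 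The increment $X^q_N(t)-X^q_N(t_n)$ decomposes into $u$-parts $u^q_N(t)-u^q_N(k_N(t))$ plus constraint parts. The latter are controlled using $Z^q=u^q+\hat v$, so that $u^q(t)-u^q(t_n)$ appears. The $\partial_t f$ terms are bounded by $k_2h$ from (A2.3), which contributes $Ch^2$.

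The terms $h^2\|X^q_N(k_N(t))\|^2$ and $h^2\|u^q_N(t)\|^2$ come from the time mismatch between $t$ and $t_n$. Specifically, they arise from replacing $A(t),R(t),J(t,\cdot),(A+RJ)^{-1}$ by their values at $t_n$, using the Lipschitz-in-$t$ property (A2.2) with constant $k_1$, which gives factors $k_1h$ multiplying $\|u^q_N\|$ and $\|X^q_N(t_n)\|$ (the latter through $F=f-J_nX_n$). The same argument handles the remainder from $A(t_n)Q(t)\neq0$: by (A1.5) and differentiability of $A$, $\|A(t_n)Q(t)\|\le Ch$. Squaring and using $(a_1+\dots+a_m)^2\le m\sum a_i^2$ then yields the stated inequality, with $C$ depending on $q,T,k_1,k_2,k_3$ but not on $N$.

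I expect the main obstacle to be the bookkeeping needed to avoid circularity. The difference $\hat v_N-\hat v$ reappears inside $X^q_N(t)-Z^q(t)$, so the argument must keep that term on the right-hand side as $\|X^q_N(t)-Z^q(t)\|^2$ rather than trying to absorb it. The statement permits this, so no smallness condition is needed here. The other delicate point is to interpret $\hat v_N(t,\cdot)$ for $t\in(t_n,t_{n+1})$ consistently with the continuous-time form \eqref{sdae}, where the coefficients are frozen at $k_N(t)$. I would first fix this by defining $\hat v_N(t,u)$ through the formula of Lemma \ref{lem3} with $n$ determined by $k_N(t)$.
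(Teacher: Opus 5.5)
Your first half is sound, and in one respect cleaner than the paper's. You use the same matrix $A(t_n)+R(t_n)J_n$ for both constraints and expand $f_q$ around the numerical point $(t_n,X^q_N(t_n))$, so the linear terms cancel exactly. This avoids the mismatch between $J(t_n,X^q_N(t_n))$ and $J(t_n,Z^q(t_n))$, which the paper's estimates of $A_1$ and $A_2$ treat as a pure time mismatch. What this first half gives is
\[
\|\hat v_N(t,u^q_N(t))-\hat v(t,u^q(t))\|^2\le C\bigl(h^2+\|u^q_N(t)-u^q(t)\|^2+\|Z^q(t)-X^q_N(t_n)\|^2\bigr),
\]
where $X^q_N(t_n)$ is the grid value entering $J_n$ and $F$.

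The gap is the conversion of $\|Z^q(t)-X^q_N(t_n)\|^2$ into the listed terms. After splitting off $X^q_N(t)-Z^q(t)$, you must control the algebraic part $v_N(t)-v_N(t_n)$ of the numerical increment.

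If you route it through $Z^q=u^q+\hat v$, as you propose, you get $Q(X^q_N(t)-Z^q(t))+[\hat v(t,u^q(t))-\hat v(t_n,u^q(t_n))]+Q(Z^q(t_n)-X^q_N(t_n))$. The grid-point error $\|X^q_N(t_n)-Z^q(t_n)\|$ is not on the right-hand side. Splitting it again only reproduces $v_N(t)-v_N(t_n)$ with coefficient at least one, so nothing can be absorbed.

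If you instead use the affine formula for $\hat v_N$ on $[t_n,t_{n+1})$, you get
\[
v_N(t)-v_N(t_n)=-(A(t_n)+R(t_n)J_n)^{-1}R(t_n)\bigl[J_n(u^q_N(t)-u^q_N(t_n))+f_q(t_n,X^q_N(t_n))+\tfrac{\partial f_q}{\partial t}(t_n,X^q_N(t_n))h\bigr].
\]
Here the residual $R(t_n)f_q(t_n,X^q_N(t_n))$ is nonzero in general, because $X^q_N(t_n)$ only satisfies the constraint linearized at $t_{n-1}$. It is controlled by $\|X^q_N(t_n)-Z^q(t_n)\|$ or by the previous increment, and neither appears in the statement. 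The paper's proof has the same weak point: the term $C\|X^q_N(k_N(t))-Z^q(t_n)\|^2$ coming from $A_3$ silently disappears in its last display.

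The missing observation is that no estimation is needed. By the decomposition of Lemma \ref{lem3}, $X^q_N(t)=u^q_N(t)+\hat v_N(t,u^q_N(t))$ and $Z^q(t)=u^q(t)+\hat v(t,u^q(t))$, with $u=PX$ and $\hat v=QX$, where $Q=I-P$ is constant by (A1.5). Hence $\hat v_N(t,u^q_N(t))-\hat v(t,u^q(t))=Q\bigl(X^q_N(t)-Z^q(t)\bigr)$. The left-hand side is therefore at most $|Q|_F^2\|X^q_N(t)-Z^q(t)\|^2$, which is one of the permitted terms. You already noticed that $\hat v_N-\hat v$ sits inside $X^q_N(t)-Z^q(t)$. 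That remark alone proves the lemma, whereas the Taylor route introduces a term it cannot remove.
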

            \begin{proof}	
                
                
                Following a similar approach as in the proof of \cite[Lemma 7, Eq. (22)]{monarticle2}, we obtain the constraint equation associated with the numerical SDAEs \eqref{sdae} where the coefficients are replaced by $f_q, g_q$ defined from Remark \ref{rem1}. We therefore  have 
               \begin{align}\label{17u}
                A(k_N(t))v_N(t)&+R(k_N(t))J^q_N[u_N(t)+v_N(t)]+R(k_N(t))f_q(k_n(t),X_N(k_N(t)))\nonumber\\
                &-R(k_N(t))J^q_N[X_N(k_N(t))]+R(k_N(t))\frac{\partial f_q}{\partial t}h=0.
                \end{align}
               
                This means that the implicit function $\hat{v}_N(.,.)$ is a solution of the following equation
                \begin{align}\label{17u}
            \hat{v}_N(t,u^q_N(t))=&-(  A(k_N(t)+R(k_N(t))J^q_N(k_N(t)))^{-1}R(k_N(t))J^q_Nu^q_N(t) \nonumber\\
                &+(  A(k_N(t)+R(k_N(t))J^q_N(k_N(t)))^{-1}R(k_N(t))J^q_N X^q_N(k_N(t))\nonumber\\
                &-(  A(k_N(t)+R(k_N(t))J^q_N)^{-1}R(k_N(t))f_q(k_N(t),X^q_N(k_N(t)))\nonumber\\
                &-(  A(k_N(t)+R(k_N(t))J_N)^{-1}R(k_N(t))\frac{\partial f_q}{\partial t}h.
                \end{align}
                 Following a similar approach as in the proof of \cite[Theorem 1, Eq. (10)]{serea2025existence}, we obtain the constraint equation associated with the initial  SDAEs \eqref{equa1} 
                 \begin{align*}
                     A(t)v(t)+R(t)f_q(t, Z(t))=0.
                 \end{align*}
Using  Taylor expansion on the function $f_q$ at $(t_n, ~x_n)$, we obtain

                   \begin{align}\label{19u}
                     A(t)v(t)+R(t)\left[f_q(t_n,x^q_n)+J^q(t_n)(Z^q-x^q_n)+\frac{\partial f_q}{\partial t}(t_n,x^q_n)h+ R^q_1(t_n,h)\right]=0,
                 \end{align}
                 where $$R^q_1(t_n,h)=\int_0^1(1-s)\left(f^q_{tt}(\gamma (s))h^2+2f^q_{tz}(\gamma (s))[h,\Delta Z]+f^q_{zz}(\gamma (s))[\Delta Z, \Delta Z]\right)ds,$$
                 and $\gamma (s)=(t_n+sh, Z(t_n+sh));~ \Delta Z=Z^q(t_n+sh)-Z^q(t_n);~s\in [0,1]$.\\
                 
                 This means that  the implicit function $\hat{v}(t,u^q(t))$ is solution of the equation \eqref{19u} with
                \begin{align}\label{20u}
                    \hat{ v}(t, u^q(t))&=-(A(t)+R(t)J^q(t_n))^{-1}R(t)J^q(t_n)u^q(t)\nonumber\\
                    &+(A(t)+R(t)J^q(t_n))^{-1}R(t)J^q(t_n)Z^q(t_n)\nonumber\\
                    &-(A(t)+R(t)J^q(t_n))^{-1}R(t)f_q(t_n,Z^q(t_n))\nonumber\\
                    &-(A(t)+R(t)J^q(t_n))^{-1}R(t)[\frac{\partial f_q}{\partial t}(t_n,Z^q(t_n))h+R_1^q(t_n,h)].
                 \end{align}
          Note that the implicit functions $\hat{v}^N$ and $\hat{v}$ are globally Lipschitz with respect to the second variable  $u^n$ or $u$. Indeed  we can just prove that $\left\|\frac{\partial \hat{v}^n}{\partial u}(t,u)\right\|\leq L_{\hat{v}}$ \footnote{See for example \cite[Part 2 in the proof of Theorem 1]{serea2025existence} for more explanation.},  where $L_{\hat{v}}$ is also the Lipschitz constant independent to $n$. 
                
                 By taking \eqref{20u}-\eqref{17u}  we obtain
                 \begin{align*}
                     \hat{ v}(t, u^q(t))-&   \hat{v}_N(t,u^q_N(t))=-(A(t)+R(t)J^q(t_n))^{-1}R(t)J^q(t_n)u^q(t)\\
                    &+(  A(k_N(t)+R(k_N(t))J^q_N(k_N(t)))^{-1}R(k_N(t))J^q_Nu^q_N(t) \\
                    &+(A(t)+R(t)J^q(t_n))^{-1}R(t)J^q(t_n)Z^q(t_n)\\
                &-(  A(k_N(t)+R(k_N(t))J^q_N(k_N(t)))^{-1}R(k_N(t))J^q_N X^q_N(k_N(t))\\
                &-(A(t)+R(t)J^q(t_n))^{-1}R(t)f_q(t_n,Z^q(t_n))\\
                &+(  A(k_N(t)+R(k_N(t))J^q_N)^{-1}R(k_N(t))f_q(k_N(t),X^q_N(k_N(t)))\\
                &-(A(t)+R(t)J^q(t_n))^{-1}[R(t)\frac{\partial f_q}{\partial t}(t_n,Z^q(t_n))h+R_1^q(t_n,h)]\\
                &+(  A(k_N(t)+R(k_N(t))J^q_N)^{-1}R(k_N(t))\frac{\partial f_q}{\partial t}h.
                 \end{align*}
                 By taking the norm square on both sides, we have
                  \begin{align}\label{21u}
                      & \left\| \hat{ v}(t, u^q(t))-   \hat{v}_N(t,u^q_N(t))\right\|^2\leq 5\left(\left\|-(A(t)+R(t)J^q(t_n))^{-1}R(t)J^q(t_n)u^q(t)\right.\right.\nonumber\\
                    &~~~~~~~~~~~~~~~~~~~~~~~\left.\left.+(  A(k_N(t)+R(k_N(t))J^q_N(k_N(t)))^{-1}R(k_N(t))J^q_Nu^q_N(t)\right\|^2\right.\nonumber\\
                    &~~~~~~~~~~~~~~~~~~~~~~~+\left.\left\|(  (A(t)+R(t)J^q(t_n))^{-1}R(t)J^q(t_n)Z^q(t_n)\right.\right.\nonumber \\
                &~~~~~~~~~~~~~~~~~~~~~~~\left.\left.-(  A(k_N(t)+R(k_N(t))J^q_N(k_N(t)))^{-1}R(k_N(t))J^q_N X^q_N(k_N(t))\right\|^2\right.\nonumber\\
                &~~~~~~~~~~~~~~~~~~~~~~~+\left.\left\|-(A(t)+R(t)J^q(t_n))^{-1}R(t_n)f_q(t_n,Z^q(t_n))\right.\right.\nonumber\\
                &~~~~~~~~~~~~~~~~~~~~~~~\left.\left. +(  A(k_N(t)+R(k_N(t))J^q_N)^{-1}R(k_N(t))f_q(k_n(t),X^q_N(k_N(t)))\right\|^2\right.\nonumber\\
                &~~~~~~~~~~~~~~~~~~~~~~~+\left.\left\|-(A(t)+R(t)J^q(t_n))^{-1}[R(t)\frac{\partial f_q}{\partial t}(t_n,Z^q(t_n))h+R_1^q(t_n,h)]\right.\right.\nonumber\\
                &~~~~~~~~~~~~~~~~~~~~~~~\left.\left. +(  A(k_N(t)+R(k_N(t))J^q_N)^{-1}R(k_N(t))\frac{\partial f_q}{\partial t}h\right\|^2\right)\nonumber\\
                &~~~~~~~~~~~~~~~~~~~~~~~:=5(A_1+A_2+A_3+A_4+A_5),
                 \end{align} 
                 with:
                 \begin{align*}
                   A_1&:=  \left\|-(A(t)+R(t)J^q(t_n))^{-1}R(t)J^q(t_n)u^q(t)\right.\\
                    &\left.+(  A(k_N(t)+R(k_N(t))J^q_N(k_N(t)))^{-1}R(k_N(t))J^q_Nu^q_N(t)\right\|^2.\\
                    \\
                    A_2&:=\left\|(  (A(t)+R(t)J^q(t_n))^{-1}R(t)J^q(t_n)Z^q(t_n)\right. \\
                &\left.-(  A(k_N(t)+R(k_N(t))J^q_N(k_N(t)))^{-1}R(k_N(t))J^q_N X^q_N(k_N(t))\right\|^2.\\
                \\
                A_3&=\left\|-(A(t)+R(t)J^q(t_n))^{-1}R(t_n)f_q(t_n,Z^q(t_n))\right.\\
                &\left. +(  A(k_N(t)+R(k_N(t))J^q_N)^{-1}R(k_N(t))f_q(k_N(t),X^q_N(k_N(t)))\right\|^2.\\
                \\ A_4&=\left\|-(A(t)+R(t)J^q(t_n))^{-1}R(t)\frac{\partial f_q}{\partial t}(t_n,Z^q(t_n))h\right.\\
                &\left. +(  A(k_N(t)+R(k_N(t))J^q_N)^{-1}R(k_N(t))\frac{\partial f_q}{\partial t}(k_N(t),X^q_N(k_N(t)))h\right\|^2.\\
                A_5&=\left\|(A(t)+R(t)J^q(t_n))^{-1}R_1^q(t_n,h)]\right\|^2.
                        \end{align*}
                 Let us estimate each term. We use the following equation in our estimation
                \begin{equation}\label{22u}
            ab-cd=a(b-d)+(a-c)d.
                \end{equation}
                  This means that for the estimation of $A_1$ we have
                 \begin{align*}
                      A_1&:= \left\|-(A(t)+R(t)J^q(t_n))^{-1}R(t)J^q(t_n)u^q(t)\right.\\
                    &\left.+(  A(k_N(t)+R(k_N(t))J^q_N(k_N(t)))^{-1}R(k_N(t))J^q_Nu^q_N(t)\right\|^2\\
                &\leq \left\|(  A(t)+R(t)J^q(t_n))^{-1}R(t)J^q(t_n)\right\|^2\left\|u^q(t)-u^q_N(t)\right\|^2+ \left\|u^q_N(t)\right\|^2 \\
                &\times\left\| A(k_N(t)+R(k_N(t))J^q_N(k_N(t)))^{-1}R(k_N(t))J^q_N\right.\\
                &\left.-(A(t)+R(t)J^q(t_n))^{-1}R(t)J^q(t_n) \right\|^2\\
                &\leq C(k_1)(\left\|u^q_N(t)-u^q(t)\right\|^2+\left\|u^q_N(t)\right\|^2\left\|t-k_N(t)\right|^2).
                 \end{align*}
                    Using the same method as in $A_1$ we obtain
                    \begin{align*}
                      A_2    &\leq C(k_1)(\left\|X^q_N(k_N(t))-Z^q(t)\right\|^2+\left\|X^q_N(k_N(t))\right\|^2\left|t-k_N(t)\right|^2)\\
                      &\leq C(k_1)\left(\left\|X^q_N(k_N(t))-X^q_N(t)\right\|^2+\left\|X^q_N(t)-Z^q(t)\right\|^2\right.\\
                      &\left.+\left\|X^q_N(k_N(t))\right\|^2\left|t-k_N(t)\right|^2\right).
                 \end{align*}

                   For  the estimation of  $A_3$ , we also have
               \begin{align*}
                   A_3&:=  \left\|(  A(k_N(t)+R(k_N(t))J^q_N)^{-1}R(k_N(t))f_q(k_N(t),X^q_N(k_N(t)))\right.\\
                    &\left.-(A(t)+R(t)J^q(t_n))^{-1}R(t_n)f_q(t_n, Z^q(t_n))\right\|^2\\
                    &\leq  \left\|(  A(k_N(t)+R(k_N(t))J_N)^{-1}R(k_N(t))\right\|^2 \\
                    &\times \left\|f_q(k_N(t),X^q_N(k_N(t)))-f_q(t_n, Z^q(t_n))\right\|^2+\left\|f_q(t_n, Z^q(t_n))\right\|^2\\
                    &\times \left\|(  A(k_N(t)+R(k_N(t))J^q_N)^{-1}R(k_N(t))-(A(t)+R(t)J^q(t_n))^{-1}R(t)\right\|^2 .
                 \end{align*}
                 From Assumption \ref{asum1}, Assumption \ref{assum2} and Remark \ref{rem1} we have
                                \begin{align*}
                   A_3
                    &\leq  C(L_q,k_1) \left\|X^q_N(k_N(t))-Z^q(t_n)\right\|^2\\
                    &+  C(L_q,k_1) \left\|(  A(k_N(t)+R(k_N(t))J^q_N)^{-1}R(k_N(t))-(A(t)+R(t)J^q(t_n))^{-1}R(t)\right\|^2 \\
                    &\leq  C(L_q,k_1)\left\|X^q_N(k_N(t))-Z^q(t_n)\right\|^2+ C(L_q,k_1)\left|t-k_N(t)\right|^2.
                 \end{align*}
               
               
                 Let us estimate $A_4$ also
                 \begin{align*}
                      A_4&:=h^2\left\|-(A(t)+R(t)J^q(t_n))^{-1}R(t)\frac{\partial f_q}{\partial t}(t_n,Z^q(t_n))\right.\\
                &\left. +(  A(k_N(t)+R(k_N(t))J^q_N)^{-1}R(k_N(t))\frac{\partial f_q}{\partial t}(k_n(t),X^q_N(k_N(t)))\right\|^2\\
                &\leq h^2(\left\|(A(t)+R(t)J^q(t_n))^{-1}R(t)\right\|^2\left\|\frac{\partial f_q}{\partial t}(t_n,Z^q(t_n))\right\|^2\\
                & +\left\|(  A(k_N(t)+R(k_N(t))J^q_N)^{-1}R(k_N(t))\right\|^2\left\|\frac{\partial f_q}{\partial t}(k_n(t),X^q_N(k_N(t)))\right\|^2\\
                      &\leq C(L_q,k_1,k_2)h^2.
                 \end{align*}
                 Let us find the estimation of $A_5$
                 \begin{align*}
                  A_5&=\left\|(A(t)+R(t)J^q(t_n))^{-1}R_1^q(t_n,h)]\right\|^2\\
                  &\leq k_1\left\|R_1^q(t_n,h)]\right\|^2\\
                  &\leq k_1\int_0^1\left\|(1-s)\left(f^q_{tt}(\gamma (s))h^2+2f^q_{tz}(\gamma (s))[h,\Delta Z]+f^q_{zz}(\gamma (s))[\Delta Z, \Delta Z]\right)\right\|^2ds\\
                 & \leq k_1k_3^2(h^2+2\left\|(h,\Delta Z)\right\|^2+\left\|(\Delta Z, \Delta Z)\right\|^2\int_0^1\left(1-s\right)^2ds\\
                 &\leq C(h^2+\|\Delta Z\|^2)=C(h^2+\| Z(t_n+sh)-Z(t_n)\|^2)\\
                 &\leq C(h^2-\| Z(t_n+sh)-Z(t_n)\|^2).
                 \end{align*}
                 By replacing $A_1$, $A_2$, $A_3$ , $A_4$ and $A_5$ in \eqref{21u} and using the fact that $\left|k_N(t)-t\right|\leq h$ we obtain
                   \begin{align*}
                       &\left\| \hat{ v}(t, u^q(t))-   \hat{v}_N(t,u^q_N(t))\right\|^2   \leq C\left(h^2+\left\|X^q_N(k_N(t))-X^q_N(t)\right\|^2\right.\\
                       &~~~~~~~~~~~~~~~~~~~~\left.+\| Z^q(t_n+sh)-Z^q(t_n)\|^2\right)\\
                    & ~~~~~~~~~~~~~~~~~~~~+ C \left\|X^q_N(k_N(t))-Z^q(t_n)\right\|^2+C\left\|X^q_N(t)-Z^q(t)\right\|^2\nonumber\\
                    &~~~~~~~~~~~~~~~~~~~~+C\left\|u^q_N(t)-u^q(t)\right\|^2+Ch^2( \left\|X^q_N(k_N(t))\right\|^2+\left\|u^q_N(t)\right\|^2).
                       \end{align*}
                    This means that
                     \begin{align}\label{23u}
                       &\left\| \hat{ v}(t, u^q(t))-   \hat{v}_N(t,u^q_N(t))\right\|^2   \leq C\left(h^2+\left\|u^q_N(t)-u^q(t)\right\|^2\right.\nonumber\\
                       &~~~~~~~~~~~~~~~~~~~~\left.+\| u^q(t)-u^q(t_n)\|^2\right)  + C\left\|u^q_N(t)-u^q_N(k_N(t))\right\|^2\nonumber\\
                    &~~~~~~~~~~~~~~~~~~~~+C\left\|X^q_N(t)-Z^q(t)\right\|^2+Ch^2( \left\|X^q_N(k_N(t))\right\|^2+\left\|u^q_N(t)\right\|^2).
                       \end{align}
              This ends the proof our lemma.         
			\end{proof}
            The following and last lemma establishes the convergence in probability of the numerical scheme.
           \begin{lem} \label{exis}
     Assume that Assumptions \ref{asum1} and \ref{assum2} hold, then the numerical scheme \eqref{equa41}  defined on $D_q,~q\geq1$   converges in probability,  uniformly on the interval $[0, T ]$ to  the unique solution $Z_q$ of the SDAEs \eqref{equa1}, as $N\to \infty$. 
     \end{lem}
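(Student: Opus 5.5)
We first prove a mean-square estimate for the truncated problem, where $f,g$ are replaced by $f_q,g_q$ of Remark \ref{rem1}, and then pass to convergence in probability by localization. Fix $q\ge 1$ and work with the decomposition of Lemma \ref{lem3}: $X_N^q=u_N^q+\hat v_N(\cdot,u_N^q)$ and $Z^q=u^q+\hat v(\cdot,u^q)$. Since $P$ is constant by (A1.5), the differential component $u^q$ satisfies an SDE with drift $A^-f_q$ and diffusion $A^-g_q$. The scheme for $u_N^q$ is the semi-implicit recursion of Lemma \ref{lem3}, which we write in continuous integral form as in \eqref{equa41}.

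\textbf{Step 1: mean-square error of the differential component.} Subtract the integral forms of $u_N^q(t)$ and $u^q(t)$. The drift difference splits into three pieces.
\begin{itemize}
\item The implicit linear part $A^-J_n[u_N^q+\hat v_N]$ together with $A^-F(\cdot)$ reproduces the first-order Taylor expansion of $f_q$ at $(k_N(s),X_N^q(k_N(s)))$. Its difference from $A^-f_q(s,Z^q(s))$ is therefore controlled by the Lipschitz constant $L_q$ times $\|X_N^q(s)-Z^q(s)\|$.
\item The Taylor remainder $R_1^q$ is bounded through (A2.4) by $C(h^2+\|\Delta Z\|^2)$.
\item The $h^2\partial_t f$ term is $O(h)$ by (A2.3).
\end{itemize}
For the stochastic integrals, we use Itô's isometry (or the Burkholder--Davis--Gundy inequality for the supremum) together with the Lipschitz bound on $g_q$ to get $L_q^2\,\mathbb E\|X_N^q(k_N(s))-Z^q(s)\|^2$.

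\textbf{Step 2: close the Gronwall loop.} Insert $\|X_N^q(s)-Z^q(s)\|^2\le 2\|u_N^q-u^q\|^2+2\|\hat v_N(s,u_N^q)-\hat v(s,u^q)\|^2$ and bound the second term by Lemma \ref{lemma4}. The extra terms in Lemma \ref{lemma4} are handled as follows:
\begin{itemize}
\item the increments $\|u^q(s)-u^q(k_N(s))\|^2$ and $\|u_N^q(s)-u_N^q(k_N(s))\|^2$ have expectation $O(h)$ by Lemma \ref{theo3a};
\item the terms $h^2\|X_N^q\|^2$ and $h^2\|u_N^q\|^2$ have expectation $O(h^2)$ by the moment bound of Lemma \ref{lem2}.
\end{itemize}
The remaining term $C\|X_N^q-Z^q\|^2$ on the right of Lemma \ref{lemma4} is again bounded via the global Lipschitz property of $\hat v,\hat v_N$ (constant $L_{\hat v}$) by $C\|u_N^q-u^q\|^2$ plus lower-order terms. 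Gronwall's inequality then gives
\[\mathbb E\sup_{t\le T}\|X_N^q(t)-Z^q(t)\|^2\le C_q h .\]

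\textbf{Step 3: localization.} Let $\tau_q=\inf\{t: Z(t)\notin D_q\}$ and $\sigma_q^N=\inf\{t: X_N(t)\notin D_q\}$. On $\{t\le\tau_q\wedge\sigma_q^N\}$ the truncated and original processes agree, so $X_N=X_N^q$ and $Z=Z^q$ there. For $\varepsilon>0$ we estimate
\[\mathbb P\Big(\sup_{t\le T}\|X_N-Z\|>\varepsilon\Big)\le \mathbb P(\tau_q\le T)+\mathbb P\Big(\sup_t\|X_N^q-Z^q\|>\varepsilon\Big)+\mathbb P(\sigma^N_q\le T<\tau_q).\]
\begin{itemize}
\item The first term is small for large $q$, by Lemma \ref{lemaw} and the Khasminskii condition (A1.3), which give $\mathbb P(\tau_q\le T)\le C/V_q(T)$.
\item The second term tends to $0$ as $N\to\infty$ by Chebyshev's inequality and Step 2.
\item For the third term we use $D_{q}\subset D_{q+1}$: before $X_N$ leaves $D_{q+1}$ while $Z$ stays in $D_q$, the distance $\|X_N^{q+1}-Z^{q+1}\|$ must exceed $\mathrm{dist}(\partial D_q,\partial D_{q+1})$. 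This event is again controlled by Step 2 at level $q+1$, provided the boundaries are separated (or, more carefully, by comparing with $Z$ on $D_{q'}$ for $q'<q$).
\end{itemize}
Choosing $q$ large first and then $N$ large gives the claimed convergence in probability.

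The main obstacle is Step 1–2. The drift of the scheme is evaluated implicitly at $u_{n+1}+\hat v_N(t_{n+1},u_{n+1})$ while the nonlinear remainder is frozen at $t_n$, so the Taylor linearization error must be matched carefully to $f_q(s,Z^q(s))$. In addition, the $\|X_N^q-Z^q\|^2$ term appearing inside Lemma \ref{lemma4} must be absorbed rather than create circularity. I would handle the latter by first fixing $t$ and using the algebraic constraint directly: the invertibility of $A+RJ$ with bounded inverse (A2.2) lets one solve for $v_N-v$ with a contraction constant. If that constant is not below one, I would instead estimate $\hat v_N-\hat v$ through the Lipschitz bound $L_{\hat v}$ plus the $O(h)$ consistency terms, which avoids the self-referential term altogether.
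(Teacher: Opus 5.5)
Your route differs from the paper's, and it fails at exactly the point you flag as the main obstacle. In Step~2 you bound the term $C\|X_N^q-Z^q\|^2$ of Lemma~\ref{lemma4} by $C\|u_N^q-u^q\|^2$ plus lower-order terms ``via the global Lipschitz property of $\hat v,\hat v_N$''. But $\|X_N^q-Z^q\|\le\|u_N^q-u^q\|+\|\hat v_N(t,u_N^q)-\hat v(t,u^q)\|$, and Lipschitz continuity in the second argument only handles $\hat v_N(t,u_N^q)-\hat v_N(t,u^q)$. What remains, $\hat v_N(t,u)-\hat v(t,u)$, compares two different functions and is precisely what Lemma~\ref{lemma4} was meant to control, so the argument is circular. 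With the Lipschitz-type bounds of Lemma~\ref{lemma4}, this term is also not an $O(h)$ consistency term. The reason is that $\hat v_N(t,\cdot)$ is built from the linearization point $X_N^q(k_N(t))$ through $f_q(k_N(t),X_N^q(k_N(t)))$ and $J^q_N=J^q(k_N(t),X_N^q(k_N(t)))$, so the difference carries $\|X_N^q(t_n)-Z^q(t_n)\|$ with an $O(1)$ constant. You therefore end up with one of two things. One is an absorption $\|\hat v_N-\hat v\|^2\le 2C\|\hat v_N-\hat v\|^2+\dots$, which needs $2C<1$. The other is a pointwise recursion $e(t_{n+1})\le C\,e(t_n)+\dots$ with no factor $h$; this is not an integral inequality, Gronwall does not apply, and iterating gives growth like $C^N$. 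Your contraction fallback does not rescue this, since nothing in (A2.2) makes the propagation constant smaller than one. Step~1 depends on the same unresolved quantity, because its drift error is controlled by $\|X_N^q-Z^q\|$ at grid points.

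Your second fallback can work, but only with an idea the proposal lacks: the linearized constraint is a Newton step, so the algebraic error is not propagated linearly. Multiplying the scheme by $R(t_n)$ and using $R(t_n)A(t_n)=0$ and $Rg=0$ (index~1) gives $R(t_n)[f_q(t_n,X_n)+J_n(X_{n+1}-X_n)+\partial_t f_q(t_n,X_n)h]=0$. By (A2.4), the exact residual $\rho_{n+1}:=\|R(t_{n+1})f_q(t_{n+1},X_{n+1})\|$ is then $O(h+\|X_{n+1}-X_n\|^2)$, whatever the previous error was. Let $\hat v(t,u)$ be the exact solution of $A(t)v+R(t)f_q(t,u+v)=0$. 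The bounded global inverse of $A+RJ$ then gives $\|v_{n+1}-\hat v(t_{n+1},u_{n+1})\|\le C\rho_{n+1}$ for $v_{n+1}=\hat v_N(t_{n+1},u_{n+1})$. The constraint also gives $\|X_{n+1}-X_n\|\le C(\|u_{n+1}-u_n\|+h+\rho_n)$, so you obtain the quadratic recursion $\rho_{n+1}\le C(h+\|u_{n+1}-u_n\|^2+\rho_n^2)$. This must be controlled, e.g.\ by localizing on $\{\max_n\rho_n\le\delta\}$. Only then does $\|X_N^q(t_n)-Z^q(t_n)\|\le(1+L_{\hat v})\|u_n-u^q(t_n)\|+C\rho_n$ hold and your Gronwall loop for $u$ close.

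For comparison, the paper never uses the split or Lemma~\ref{lemma4} in this proof. It shows $(X_N^q)_N$ is Cauchy in probability by applying It\^o's formula to $\|X_N^q-X_M^q\|^2$ in the integral form \eqref{equa41}. Lemmas~\ref{lem2} and \ref{theo3a} together with Gronwall give $\mathbb{E}\sup_{s\le t}e^{-p\|\zeta\|}\|X_N^q-X_M^q\|^{2p}\le C(N^{-p}+M^{-p})$, and the limit is then identified with $Z^q$. Throughout, $S^h$ is treated as bounded via (A2.2). No such bound can be uniform in $h$: for a unit vector $k\in\ker A(t)$, $(A(t)-hJ)k=-hJk$ forces $\|S^h\|\ge 1/(h\|J\|)$. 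So the algebraic difficulty you ran into is intrinsic, not an artifact of your decomposition. Your Step~3 is not needed here, since the lemma concerns only the truncated problem; the paper localizes in the proof of Theorem~\ref{theo3}.
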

\begin{proof}
Using the fact that the function $f$  is local Lipschitz in $D$ then for every $q\geq1$ there is a bounded measurable function $f_q$ such that the functions $f_q$ and $f$ agree on $[0,T]\times D_q$ (see Remark \ref{rem1}). The same applies to the functions  $g_q$ and $g$.
We note by $X_N^q(t), N\in\mathbb{	N},q>0$ the approximation solution of equation \eqref{equa41} with the coefficient $f_q$ and $g_q$.\\
				 From scheme \eqref{equa41}, we have for $N,~M\in \mathbb{N}$
				\begin{align}\label{equa42}
					X^q_{N}(t)&=S^h_0A(t_0)\zeta +\int_{0}^{t}S^h(k_N(s))F_q(k_N(s), X_N^q(k_N(s)))ds~~~~~~~~~~~~~~~~~~~~~~~~~~\nonumber\\
                             &+\int_{0}^{t}S^h(k_N(s))\frac{\partial f_q}{\partial t}(k_N(s), X_N^q(k_N(s)))hds\nonumber\\
                             &+\int_{0}^{t}S^h(k_N(s))g_q(k_N(s),X_N^q(k_N(s)))dW(s),
				\end{align}	
                and 
				\begin{align}\label{equa43}
					X^q_{M}(t)&=S^h_0A(t_0)\zeta +\int_{0}^{t}S^h(k_M(s))F_q(k_M(s), X_M^q(k_M(s)))ds~~~~~~~~~~~~~~~~~~~~~~~~~~\nonumber\\
                            &+\int_{0}^{t}S^h(k_M(s))\frac{\partial f_q}{\partial t}(k_M(s), X_M^q(k_M(s)))hds\nonumber\\
                            &+\int_{0}^{t}S^h(k_M(s))g_q(k_M(s),X_M^q(k_M(s)))dW(s).
				\end{align}	
				Taking the difference \eqref{equa42}-\eqref{equa43}, we obtain the following result
				\begin{align}\label{equa44}
					X^q_{N}(t)-X^q_{M}(t)&=\int_{0}^{t}S^h(k_N(s))F_q(k_N(s), X_N^q(k_N(s)))ds\nonumber\\
                                      &- \int_{0}^{t}S^h(k_M(s))F_q(k_M(s), X_M^q(k_M(s)))ds\nonumber\\
                                      &+h\int_{0}^{t}S^h(k_N(s))\frac{\partial f_q}{\partial t}(k_N(s), X_N^q(k_N(s)))ds\nonumber\\
                                      &-h\int_{0}^{t}S^h(k_M(s))\frac{\partial f_q}{\partial t}(k_M(s), X_M^q(k_M(s)))ds\nonumber\\
                                      &+\int_{0}^{t}S^h(k_N(s))g_q(k_N(s),X_N^q(k_N(s)))dW(s)\nonumber\\
                                      &-\int_{0}^{t}S^h(k_M(s))g_q(k_M(s),X_M^q(k_M(s)))dW(s).
				\end{align}	
    
				We apply the It\^o formula \cite[Theorem 6.2]{Mao2007} on the function $\left\|	X^q_{N}(t)-X^q_{M}(t) \right\|^2 $,  for $t\in \left[0,T \right]. $
				\begin{align}\label{equa45}
					&\left\|	X^q_{N}(t)-X^q_{M}(t) \right\|^2=2\int_{0}^{t} \left\langle X_N^q(s)-X_M^q(s),  S^h(k_N(s))F_q(k_N(s), X_N^q(k_N(s)))\right\rangle ds\nonumber\\
					&~~~~~~~~~~~~~~~~~~-2\int_{0}^{t} \left\langle X_N^q(s)-X_M^q(s),S^h(k_M(s)) F_q(k_M(s), X_M^q(k_M(s)))\right\rangle ds\nonumber\\
                    &~~~~~~~~~~~~~~~~~~+2h\int_{0}^{t} \left\langle X_N^q(s)-X_M^q(s),S^h(k_N(s))\frac{\partial f_q}{\partial s}(k_N(s), X_N^q(k_N(s)))\right\rangle ds\nonumber\\
                    &~~~~~~~~~~~~~~~~~~-2h\int_{0}^{t} \left\langle X_N^q(s)-X_M^q(s),S^h(k_M(s))\frac{\partial f_q}{\partial s}(k_M(s), X_M^q(k_M(s)))\right\rangle ds\nonumber\\
					&~~~~~~~~~~~~~~~~~~+\int_{0}^{t}\left|S^h(k_N(s))g_q(k_N(s),X_N^q(k_N(s)))\right.\nonumber\\
					&~~~~~~~~~~~~~~~~~~ \left. -S^h(k_M(s))g_q(k_M(s),X_M^q(k_M(s))) \right|_F^2ds\nonumber\\
                    &~~~~~~~~~~~~~~~~~~+2\int_{0}^{t}\left\langle  X_N^q(s)-X_M^q(s), S^h(k_N(s))g_q(k_N(s),X_N^q(k_N(s)))\right\rangle dW(s)  \nonumber\\
                   & ~~~~~~~~~~~~~~~~~~-2\int_{0}^{t}\left\langle  X_N^q(s)-X_M^q(s), S^h(k_M(s))g_q(k_M(s),X_M^q(k_M(s))) \right\rangle dW(s).
				\end{align}
				To simplify the presentation, we define $a1(s), ~b1(s),~c1(s),~d1(s)$ by
				\begin{align*}
				    a1(s)&=2\left\langle X_N^q(s)-X_M^q(s),  S^h(k_N(s))F_q(k_N(s), X_N^q(k_N(s)))\right\rangle \\
        &-2\left\langle X_N^q(s)-X_M^q(s),S^h(k_M(s))F_q(k_M(s), X_M^q(k_M(s)))\right\rangle ,
        \end{align*}
        \begin{align*}
        b1(s)&=2h\left\langle X_N^q(s)-X_M^q(s),S^h(k_N(s))\frac{\partial f_q}{\partial s}(k_N(s), X_N^q(k_N(s)))\right\rangle \\
        &-2h\left\langle X_N^q(s)-X_M^q(s),S^h(k_M(s))\frac{\partial f_q}{\partial s}(k_M(s), X_M^q(k_M(s)))\right\rangle ,
        \end{align*}
        \begin{align*}
        c1(s)&=2\left\langle  X_N^q(s)-X_M^q(s),S^h(k_N(s))g_q(k_N(s),X_N^q(k_N(s))) \right\rangle\\
        &-2\left\langle  X_N^q(s)-X_M^q(s),S^h(k_M(s))g_q(k_M(s),X_M^q(k_M(s)))\right\rangle,
        \end{align*}
        \begin{align*}
        d1(s)&=\left|S^h(k_N(s))g_q(k_N(s),X_N^q(k_N(s)))-S^h(k_M(s))g_q(k_M(s),X_M^q(k_M(s))) \right|_F^2.\\
				\end{align*}
                Following the same process as in \cite[eq (41) ]{monarticle2}, we can establish 
that  for $p\geq 2$,  \eqref{equa45} becomes
			\begin{align}\label{equa46}
				&\mathbb{	E}\sup_{s\leq t}\left(\exp(-p\left\|\zeta \right\|) \left\|	X^q_{N}(s)-X^q_{M}(s) \right\|^{2p} \right) \leq \nonumber\\
                    &~~~~~~~~~~~~~~~~~~~~~~~~~~~~~~4^{p-1}T^{\frac{p}{2}}	 \mathbb{	E}\left\lbrace \int_{0}^{t}\exp(-2\left\|\zeta \right\| )\left|a1(s) \right| ^2ds\right\rbrace^{\frac{p}{2}}\nonumber\\
				&~~~~~~~~~~~~~~~~~~~~~~~~~~~~~~+ 4^{p-1}T^{\frac{p}{2}}	\mathbb{	E}\left\lbrace \int_{0}^{t}\exp(-2\left\|\zeta \right\| )\left|b1(s) \right| ^2ds\right\rbrace^{\frac{p}{2}}\nonumber\\
					&~~~~~~~~~~~~~~~~~~~~~~~~~~~~~~ + 4^{p-1}T^{\frac{p}{2}}\mathbb{	E}\left\lbrace \int_{0}^{t}\exp(-2\left\|\zeta \right\| )\left|d1(s) \right| ^2ds\right\rbrace^{\frac{p}{2}}\nonumber\\
     &~~~~~~~~~~~~~~~~~~~~~~~~~~~~~~+ C(p,T)\mathbb{	E}\left\lbrace  \int_{0}^{t}\exp(-2\left\|\zeta \right\| )\left|c1(s)\right| ^2 ds\right\rbrace^{\frac{p}{2}} . 	
				\end{align}
				Here $C(T,p)$ is a constant dependent on $p$ and $T$.
                
				We are  now estimating $\left| a1(s)\right| ^2$, $\left| b1(s)\right| ^2$, $\left| c1(s)\right| ^2$ and $\left| d1(s)\right|^2$.
				We use the fact that the function $f_q(\cdot,\cdot)$,  $g_q(\cdot,\cdot)$ and the matrix $S^h(t,\omega)$ are bounded and satisfy the global Lipschitz condition.  For $p\geq 2$, we have
				\begin{align*}
					\left| a1(s)\right|^2& =\left( 2\left| \left\langle X_N^q(s)-X_M^q(s),  S^h(k_N(s))F_q(k_N(s), X_N^q(k_N(s)))\right.\right.\right.\nonumber\\
     &\left.\left.\left.-S^h(k_M(s))F_q(k_M(s), X_M^q(k_M(s)))\right\rangle\right|\right)^2\\
					& \leq \left(  \left\|X_N^q(s)-X_M^q(s) \right\|^2 \right.\\
					&\left. +\left\| S^h(k_N(s))F_q(k_N(s), X_N^q(k_N(s)))-S^h(k_M(s))F_q(k_M(s), X_M^q(k_M(s))) \right\|^2  
					\right) ^{\frac{2p}{p}}\\
                   & \leq \left( \left\|X_N^q(s)-X_M^q(s) \right\|^2\right.\\
                    &\left.+2\left|S^h(k_N(s))\right|_F^2\left\|F_q(k_N(s), X_N^q(k_N(s))) -F_q(k_M(s), X_M^q(k_M(s)))\right\|^2\right.\\
                    &\left. +2\left\|F_q(k_M(s), X_M^q(k_M(s))) \right\|^2\left|S^h(k_N(s))-S^h(k_M(s)) \right|_F   \right)^{\frac{2p}{p}} .
     \end{align*}
      Using Assumption \ref{assum2} (A2.2) yields 
     \begin{align*}
					\left| a1(s)\right|^2& \leq  3^{p-1}\left( \left\|X_N^q(s)-X_M^q(s) \right\|^{2p}\right.\nonumber\\
                    &\left.+2k_1\left\|F_q(k_N(s), X_N^q(k_N(s)))
					-F_q(k_M(s), X^q(k_M(s)))\right\|^{2p}\right.\\
                    &\left.+2k_1\left\|F_q(k_M(s), X_M^q(k_M(s))) \right\|^{2p}\left( \left| k_N(s)- k_M(s) \right|_F\right)  ^{2p}\right)^{\frac{2}{p}}.
                    \end{align*}
                      This means that
                     \begin{align}\label{equa47}
					\left| a1(s)\right|^2& \leq  3^{p-1}\left( \left\|X_N^q(s)-X_M^q(s) \right\|^{2p}\right.\nonumber\\
                    &\left.+2k_1\left\|F_q(k_N(s), X_N^q(k_N(s)))-F_q(k_M(s), X_M^q(k_M(s)))\right\|^{2p} \right.\nonumber\\
                    &\left.+2k_1\left\|F_q(k_M(s), X_M^q(k_M(s))) \right\|^{2p}\left[ \left( \frac{T}{N}\right)^{2p} +\left( \frac{T}{M}\right)^{2p}\right] \right)^{\frac{2}{p}}, ~s\in \left[0,T \right]. 
				\end{align}
				Note that we use the following  inequality  to obtain the last inequality \eqref{equa47}
				\begin{align*}
                 \left\| k_N(s)-s+s-  k_M(s)\right\|&\leq\left\| k_N(s)-s\right\|+ \left\| s-k_M(s)\right\|\\
                 &\leq \frac{T}{N}+\frac{T}{M}. 
                 \end{align*}
				Using Remark \ref{rem1} and Assumption \ref{assum2} yields
                \begin{align}\label{equa20a}
                \left\| F_q(k_M(s), X_M^q(k_M(s)))\right\|^{2p}&=\left\|f(k_M(s),X_M^q(k_M(s)))-J_q(k_M(s))X^q_M(k_M(s))\right\| ^{2p}\nonumber\\
                &\leq 2^{2p-1}\left\|f(k_M(s),X_M^q(k_M(s)))\right\| ^{2p}\nonumber\\
                &+\left\|J_q(k_M(s))X_M^q(K_M(s))\right\| ^{2p}\nonumber\\
                &\leq 2^{2p-1}L_q^{2p}+2^{2p-1}k_1^{2p}\left\|X_M^q(k_M(s))\right\| ^{2p}\nonumber\\
                &\leq C+C\left\|X_M^q(k_M(s))\right\| ^{2p}.
                \end{align}
In another hand by using Assumption \ref{assum2} and Remark \ref{rem1}, we have also
				\begin{align*}
					&\left\| F_q(k_N(s), X_N^q(k_N(s)))-F_q(k_M(s), X_M^q(k_M(s))) \right\|^{2p} \nonumber\\
					&~~~~~~~~~~~~~~~~~~~~~=\left\|f(k_N(s),X_N^q(k_N(s)))-J_q(k_N(s))X_N^q(k_N(s))\right.\nonumber\\
     &~~~~~~~~~~~~~~~~~~~~~\left.-f(k_M(s),X_M^q(k_M(s)))+J_q(k_M(s))X_M^q(k_M(s))\right\| ^{2p}\nonumber\\
					&~~~~~~~~~~~~~~~~~~~~~\leq 2^{2p-1}\left\|f(k_N(s),X_N^q(k_N(s)))-f(k_M(s),X_M^q(k_M(s)))\right\|^{2p} \nonumber\\
					&~~~~~~~~~~~~~~~~~~~~~+2^{2p-1}\left|J_q(k_N(s))X_N^q(k_N(s))-J_q(k_M(s))X_M^q(k_M(s)) \right| _F^{2p} \nonumber\\
					&~~~~~~~~~~~~~~~~~~~~~\leq 2^{2p-1}L_q\left\|X_N^q(k_N(s))-X_M^q(k_M(s))\right\|^{2p}\nonumber\\
					&~~~~~~~~~~~~~~~~~~~~~+C\left|J_q(k_N(s))(X_M^q(k_M(s))- X_N^q(k_N(s)))\right| _F ^{2p} \nonumber\\
                    &~~~~~~~~~~~~~~~~~~~~~+C\left|X_M^q(k_M(s))(J_q(k_M(s))-J(k_N(s))) \right| _F ^{2p}.
                    	\end{align*}
                      Using  Assumption \ref{assum2}, we obtain
                    \begin{align*}
					&\left\| F_q(k_N(s), X_N^q(k_N(s)))-F_q(k_M(s), X_M^q(k_M(s))) \right\|^{2p} \nonumber\\
					&~~~~~~~~~~~~~~~~~~~~~\leq C\left\|X_N^q(k_N(s))-X_M^q(k_M(s))\right\|^{2p}\nonumber\\ &~~~~~~~~~~~~~~~~~~~~~+C\left|J_q(k_N(s)))\right|_F^{2p}\left\|X_M^q(k_M(s))- X_N^q(k_N(s))\right\|^{2p} \nonumber\\
					&~~~~~~~~~~~~~~~~~~~~~+C\left\| X_M^q(k_M(s))\right\|^{2p} \left\|J_q(k_M(s))-J(k_N(s))\right\| _F^{2p}\nonumber\\
					&~~~~~~~~~~~~~~~~~~~~~\leq C\left\|X_N^q(k_N(s))-X_M^q(k_M(s))\right\|^{2p}\nonumber\\
                    &~~~~~~~~~~~~~~~~~~~~~+C \left\| X_M^q(k_M(s))\right\|^{2p}\left|k_M(s)-k_N(s)\right| ^{2p}\nonumber\\
					&~~~~~~~~~~~~~~~~~~~~~\leq C\left\|X_M^q(k_M(s))-X_N^q(k_N(s))\right\|^{2p}\nonumber\\
                    &~~~~~~~~~~~~~~~~~~~~~+ C\left\| X_M^q(k_M(s))\right\|^{2p}\left[\left(\frac{T}{N}\right)^{2p} +\left( \frac{T}{M}\right)^{2p}\right],~ s\in \left[0,T \right].
				\end{align*}
              Note that the constant $C$ changes from line to line. 
              
				Using the fact that 
                \begin{align*}
               \| X_N^q(k_N(s))-X_M^q(k_N(s))\|=&\|X_N^q(k_N(s))-X_N^q(s)+X_N^q(s)-X_M^q(s)\\
               &+X_M^q(s)-X_M^q(k_M(s))\|\\
                \leq & \|X_N^q(k_N(s))-X_N^q(s)\|+\|X_N^q(s)-X_M^q(s)\|\\
                &+\|X_M^q(s)-X_M^q(k_M(s))\|,
                \end{align*}
                it follows that
				\begin{align}\label{equa48}
					&\left\| F_q(k_N(s), X_N^q(k_M(s)))-F_q(k_M(s), X_N^q(k_M(s))) \right\|^{2p}\leq C\left\|X_N^q(s)-X_M^q(s))\right\|^{2p} \nonumber\\ &~~~~~~~~~~~~~~~~~~+ C\left\|X_N^q(s)-X_N^q(k_N(s))\right\|^{2p}+C\left\|X_M^q(s)-X_M^q(k_M(s))\right\|^{2p}\nonumber\\
					&~~~~~~~~~~~~~~~~~~+C\left\| X_M^q(k_M(s))\right\|^{2p}(\left( \frac{T}{N}\right)^{2p} +\left( \frac{T}{M}\right)^{2p}),
					~s\in \left[0,T \right].
				\end{align}
				Replacing \eqref{equa20a} and \eqref{equa48} in \eqref{equa47} , we obtain
				\begin{align}\label{equa49}
					\left| a1(s)\right|^2& \leq  C \left(\left\|X_N^q(s)-X_M^q(s) \right\|^{2p}+\left\|X_N^q(s)-X_N^q(k_N(s))\right\|^{2p} \right.~~~~~~~~~~~~~~~~~~~~~~~~~\nonumber\\
					& \left. +\left\|X_M^q(s)-X_M^q(k_M(s))\right\|^{2p}+\left( \frac{T}{N}\right)^{2p} +\left( \frac{T}{M}\right)^{2p}\right.\nonumber\\
					&\left.+\left\| X_M^q(k_M(s))\right\|^{2p}(\left( \frac{T}{N}\right)^{2p} +\left( \frac{T}{M}\right)^{2p})\right)^{\frac{2}{p}}, ~s\in \left[0,T \right].
				\end{align}
				Additionally, we use the same process as in the estimation of $a1(\cdot)$ and  obtain 
                the following estimations for $c1(.)$ and $d1(.)$
				\begin{align}\label{equa50}
					\left| c1(s)\right|^2&=\left( 2\left| \left\langle  X_N^q(s)-X_M^q(s),S^h(k_N(s))g_q(k_N(s),X^q(k_N(s)))\right\rangle\right.\right.~~~~~~~~~~~~~~~~~~~\nonumber\\
     &\left.\left.  - \left\langle  X_N^q(s)-X_M^q(s),S^h(k_M(s))g_q(k_M(s),X^q(k_M(s))) \right\rangle\right| \right)^2 \nonumber\\
					& \leq  C\left( \left\|X_N^q(s)-X_M^q(s) \right\|^{2p}+\left\|X_N^q(s)-X_N^q(k_N(s))\right\|^{2p} \right.\nonumber\\
					& \left. +\left\|X_M^q(s)-X_M^q(k_M(s))\right\|^{2p}+\left( \frac{T}{N}\right)^{2p} +\left( \frac{T}{M}\right)^{2p}\right)^{\frac{2}{p}},\,\,\,\,\text{ for all}\,\,s\in \left[0,T\right],
				\end{align}
				and 
				\begin{align}\label{equa51}
					&\left| d1(s)\right|^2=
                    \left( \left|S^h(k_N(s))g_q(k_N(s),X^q(k_N(s)))-S^h(k_M(s))g_q(k_M(s),X^q(k_M(s))) \right|_F^2  \right)^2 \nonumber\\
					& ~~~~~~~~~~~~\leq  C(p)\left(\left\|X_N^q(s)-X_M^q(s)\right\|^{2p}+\left\|X_N^q(s)-X_N^q(k_N(s))\right\|^{2p} \right.\nonumber\\
					& ~~~~~~~~~~~~~~\left. +\left\|X_M^q(s)-X_M^q(k_M(s))\right\|^{2p}+\left( \frac{T}{N}\right)^{2p} +\left( \frac{T}{M}\right)^{2p}\right)^{\frac{2}{p}}\,\,\, \text{ for all}\,,s\in \left[0,T\right].
				\end{align}
				We now  estimating $b1(.)$
				\begin{align}\label{equa52}
					&\left| b1(s)\right| ^2=\left|2h \left\langle X_N^q(s)-X_M^q(s),S^h(k_N(s))\frac{\partial f_q}{\partial s}(k_N(s), X_N^q(k_N(s)))\right.\right.\nonumber\\
					&\left.\left.-S^h(k_M(s))\frac{\partial f_q}{\partial s}(k_M(s), X_M^q(k_M(s)))\right\rangle\right| ^2\nonumber\\
					&\leq \left( h^2\left\| X_N^q(s)-X_M^q(s) \right\|^2\right.\nonumber\\
					&\left.+\left\|S^h(k_N(s))\frac{\partial f_q}{\partial s}(k_N(s), X_N^q(k_N(s)))-S^h(k_M(s))\frac{\partial f_q}{\partial s}(k_M(s), X_M^q(k_M(s)))\right\| ^2  \right) ^{\frac{2p}{p}}\nonumber\\
					& \leq \left( 2^{p-1}h^{2p}\left\| X_N^q(s)-X_M^q(s) \right\|^{2p}\right.\nonumber\\
					&\left.+ 2^{3p-2} \left| \frac{\partial f_q}{\partial s}(k_M(s), X_M^q(k_M(s)))\right|^{2p}\left\| S^h(k_N(s))- S^h(k_M(s)) \right\|^{2p} \right.\nonumber\\
					&\left.+ 2^{3p-2}\left| S^h(k_N(s))\right|_F^{2p}\left\|\frac{\partial f_q}{\partial s}(k_N(s), X_N^q(k_N(s))) - \frac{\partial f_q}{\partial s}(k_M(s), X_M^q(k_M(s)))\right\|  ^{2p}\right) ^{\frac{2}{p}} \nonumber\\
					& \leq \left( 2^{p-1}h^{2p}\left\| X_N^q(s)-X_M^q(s) \right\|^{2p}+ 2^{3p-2}\left[  k_2^{2p}\left\| S^h(k_N(s))- S^h(k_M(s)) \right\| ^{2p}\right.\right. \nonumber\\
					&+k_1^{2p}\left.\left.\left\|\frac{\partial f_q}{\partial s}(k_N(s), X_N^q(k_N(s))) - \frac{\partial f_q}{\partial s}(k_M(s), X^q_M(k_M(s)))\right\|  ^{2p}\right] \right) ^{\frac{2}{p}}  ,~s\in \left[0,T \right].
				\end{align}
				Using (A2.3 ) of  Assumption \ref{assum2} yields
				\begin{align*}
					\left\|\frac{\partial f_q}{\partial s}(k_N(s), X_N^q(k_N(s)))-\frac{\partial f_q}{\partial s}(k_M(s), X_M^q(k_M(s))) \right\|^{2p}&\leq k_2\left\|k_M(s)-k_N(s)\right\| ^{2p}\\
					\leq C(\left( \frac{T}{N}\right)^{2p} +\left( \frac{T}{M}\right)^{2p}),  ~s\in \left[0,T\right].
				\end{align*}
				Equation \eqref{equa52} becomes
				\begin{align*}
					\left| b1(s)\right| ^2&\leq  \left( 2^{p-1}h^{2p}\left\| X_N^q(s)-X_M^q(s) \right\|^{2p}+   C\left\| S^h(k_N(s))- S^h(k_M(s)) \right\|^{2p}\right.\nonumber\\
					&\left. 
					+C\left(\left( \frac{T}{N}\right)^{2p} +\left( \frac{T}{M}\right)^{2p}\right)\right) ^{\frac{2}{p}},~s\in \left[0,T\right].
				\end{align*}
				Using the same process as in the estimation of $a1$, we have
				\begin{align}\label{equa53}
					\left| b1(s)\right| ^2&\leq C\left(h^{2p}\left\|X_N^q(s)-X_M^q(s) \right\|^{2p}+\left( \frac{T}{N}\right)^{2p} +\left( \frac{T}{M}\right)^{2p}\right)^{\frac{2}{p}}, ~s\in \left[0,T\right].
				\end{align}
				Replacing \eqref{equa53},\eqref{equa51} \eqref{equa50} and \eqref{equa49} in  \eqref{equa46},  and using Cauchy inequality yields
				\begin{align}\label{equa54}
					\mathbb{	E}\sup_{s\leq t}\left\|	X^q_{N}(s)-X^q_{M}(s) \right\|^{2p}&\exp(-p\left\|\zeta \right\| ) ~\nonumber\\
					& \leq C(2+h^{2p})\mathbb{E}\int_{0}^{t}\exp (-p\left\|\zeta \right\| ) \left\|X_N^q(s)-X_M^q(s) \right\|^{2p}ds\nonumber\\
					&+C\mathbb{E}\int_{0}^{t}\exp (-p\left\|\zeta \right\| )\left\|X_N^q(s)-X_N^q(k_N(s))\right\|^{2p}ds\nonumber\\
					& +C\mathbb{E}\int_{0}^{t}\exp (-p\left\|\zeta \right\| )\left\|X_M^q(s)-X_M^q(k_M(s))\right\|^{2p}ds\nonumber\\
					& +C\left[\left( \frac{T}{N}\right)^{2p} +\left( \frac{T}{M}\right)^{2p}\right]\mathbb{E}\int_{0}^{t}\left\| X_M^q(k_M(s))\right\|^{2p} \nonumber\\
					&\times \exp (-p\left\|\zeta \right\| )ds+C\left( \frac{T}{N}\right)^{2p} +C\left( \frac{T}{M}\right)^{2p}.
				\end{align}		
				Note that in \eqref{equa54}, $C=C(p,T,q)$.
             Using Lemma \ref{lem2} and  Theorem \ref{theo3a} yields
				\begin{align*}
					\mathbb{	E}\sup_{s\leq t}\exp(-p\left\|\zeta \right\| ) & \left\|	X^q_{N}(s)-X^q_{M}(s) \right\|^{2p}\\
					&\leq C\mathbb{E}\int_{0}^{t}\exp (-p\left\|\zeta \right\| )\left\|X_N^q(s)-X_M^q(s) \right\|^{2p}ds\\
					& +C\left(\frac{T}{N}\right)^{p}+C\left( \frac{T}{M}\right)^{p}\\
					& +C\left( \frac{T}{N}\right)^{2p} +C\left( \frac{T}{M}\right)^{2p}, ~t\in \left[0,T \right] .
                    \end{align*}
                    Using the fact that $\left( \frac{T}{N}\right)^{2p}\leq \left( \frac{T}{N}\right)^{p}$ we have
				\begin{align}\label{equa59}
					\mathbb{	E}\sup_{s\leq t}\exp(-p\left\|\zeta \right\| ) & \left\|	X^q_{N}(s)-X^q_{M}(s) \right\|^{2p}\nonumber\\
					& \leq C\mathbb{E}\int_{0}^{t}\exp (-p\left\|\zeta \right\| ) \left\|X_N^q(s)-X_M^q(s) \right\|^{2p}ds~~~~~~~~~~~~\nonumber\\
					&+C\left( \frac{T}{N}\right)^{p} +C\left( \frac{T}{M}\right)^{p},~t\in\left[0,T \right]. 
				\end{align}	
				Using Gronwall's lemma, we obtain the following inequality
				\begin{align}\label{equa60}
					\mathbb{	E}\sup_{s\leq t}\left(\exp(-p\left\|\zeta \right\| )  \left\|	X^q_{N}(s)-X^q_{M}(s) \right\|^{2p}\right)\leq		C\left( \frac{T}{N}\right)^{p} +C\left( \frac{T}{M}\right)^{p},~t\in\left[0,T \right].
				\end{align}		

   Fix $\epsilon >0$, Markov's inequality applied  in \eqref{equa60} yields
   {\small {
    \begin{align*}
         \mathbb{P}\left(\sup_{s\leq t}\|X^q_N(s)-X^q_M(s)\|>\epsilon\right)&=\mathbb{P}\left(\sup_{s\leq t}\exp(-p\left\|\zeta \right\| )	\left\|X^q_{N}(s)-	X^q_{M}(s)\right\| ^{2p}
        >\epsilon^{2p}\exp(-p\left\|\zeta \right\|\right)\\
        &\leq \frac{\mathbb{E}\left(\sup_{s\leq t}\exp(-p\left\|\zeta \right\| )	\left\| X^q_{N}(s)-	X^q_{M}(s)\right\| ^{2p})\right)}{\epsilon^{2p}}\\
        &\leq \frac{C}{\epsilon^{2p}}\left(\frac{1}{N^{p}}+\frac{1}{M^{p}}\right).
    \end{align*}}}
    Letting $N,M \to   \infty$ yields
    $$\sup_{N,M}\mathbb{P}\left(\sup_{s\leq t}\|X^q_N(s)-X^q_M(s)\|>\epsilon\right)\to 0.$$
	The sequence $(X_N^q(t))_{N\geq 1}$ is then a Cauchy's sequence  and converges  in probability to the continuous stochastic process that we denote $Z_q(t)$, uniformly in $t\in \left[0,T \right] $.\\
    In addition, as $f_q$ continuously differentiable in $D_q$ \footnote{Indeed  this comes from $f$ as $f_q$ and $f$ agree on $D_q$.} and $g_q$ continuous in $D_q$, the following uniform convergences in probability hold for all $t\in [0,T]$ as $N\to \infty$ 
 \begin{align*}   f_q(k_N(t),X^q_N(k_N(t)))&+\frac{\partial f_q}{\partial X}(k_N(t),X^q_N(k_N(t)))[X^q_N(t)-X^q_N(k_N(t)]\\
 &+\frac{\partial f_q}{\partial t}(k_N(t),X^q_N(k_N(t)))[t-k_N(t)]\to f_q(t,Z^q(t)) \text{ and }\\
 g_q(k_N(t),X^q_N(k_N(t)))&\to g_q(t,Z^q(t)).
 \end{align*} 
 Furthermore  $Z_q(t)$ is the solution of \eqref{equa1}, where $f$ and $g$ are replaced by $f_q$ and $g_q$ respectively.
   Indeed the proof follows the same process as in \cite[Theorem 2, p. 38]{monarticle2}).
       \\
       
        This means that $X_N^q(t)$ converges  in probability to the continuous stochastic process $Z^q(t)$ solution of \eqref{equa1}, uniformly in $t\in \left[0,T \right] $ and then 
  \begin{align*}
		\mathbb{	E}	\sup_{s\leq t}\left( \exp(-p\left\|\zeta \right\| )	\left\|X^q_{N}(s)-	Z^q(s)\right\|^{2p}\right)
		&\leq \frac{C}{N^{p}} .
	\end{align*}
\end{proof}

            At this point, we have enough keys results for the proof of our main result. 
			\subsubsection{Proof Theorem \ref{theo3}} 
           \begin{proof}
				From the proof of Lemma \ref{exis} we have this relation
                 \begin{align*}
		\mathbb{	E}	\sup_{s\leq t}\left( \exp(-p\left\|\zeta \right\| )	\left\| 	X^q_{N}(s)-	Z^q(s)\right\|^{2p}\right)
		&\leq \frac{C}{N^{p}} .
	\end{align*}
     Let  $\alpha>0$,  by using Markov's inequality, for $t\in \left[0,T \right] $, we have
	\begin{eqnarray}\label{equa19}
		&&\mathbb{	P}	\left(\sup_{s\leq t} \exp(-\frac{1}{2}\left\|\zeta \right\| )	\left\| X_N^q(s)-	Z^q(s)\right\|\geq\frac{1}{N^{\alpha}}\right) \nonumber\\
		&\leq& 
        \cfrac{	\mathbb{E}	\left( \sup_{s\leq t} \exp(-\frac{1}{2}\left\|\zeta \right\| )	\left\| X_N^q(s)-	Z^q(s)\right\|\right) ^{2p}}{\left( \dfrac{1}{N^\alpha}\right) ^{2p}} \nonumber\\
        &\leq&	
		\mathbb{	E}	\left( \sup_{s\leq t} \exp(-p\left\|\zeta \right\| )	\left\| X_N^q(s)-	Z^q(s)\right\|^{2p}\right)\dfrac{N^{2p\alpha} }{1} \nonumber\\
		&\leq& C\cfrac{N^{2p\alpha}}{N^p}= \frac{C}{N^{p-2p\alpha}} \nonumber.
	\end{eqnarray}
	For ${p-2p\alpha}>1$, we need to have  $\alpha<\frac{1}{2}$. For $ \alpha \in \left(0,\frac{1}{2}\right)$, by  choosing $p>1$,  we therefore have
	\begin{eqnarray}
	    \label{equa19a}
		\sum_N	\mathbb{P}	\left( \sup_{s\leq t} \exp(-\frac{1}{2}\left\|\zeta \right\| )	\left\| X_N^q(s)-	Z^q(s)\right\|\geq\frac{1}{N^{\alpha}}\right)
		&\leq C\sum_N\frac{1}{N^{p-2p\alpha}}
		<\infty. \nonumber
	\end{eqnarray}
	 Note that  C is a constant that does not depend on $N$. 
Following \cite[pp. 210]{gyongy1998note},  for any $q\geq 1$, we obtain
	\begin{equation*}
		\sup_{t\leq \tau^q}\left| X_N(t)-Z^q(t)\right|  \leq \beta N^{-\alpha},
	\end{equation*}
    with $$\tau^q=T\land \inf \left\lbrace t\geq 0,~ Z^q(t)\notin D_q \right\rbrace .~~~~~~~$$ 

    Furthermore, following \cite[pp. 210]{gyongy1998note} we also have 
	\begin{equation}\label{equa21}
		\sup_{t\leq T}\left| X_N(t)-Z(t)\right| \leq \beta N^{-\alpha}.
	\end{equation}
    This is done by using the fact that
    $$Y(t):=\lim_{q\to \infty} Z^q(t);~ t\leq \tau ,$$  where 
    $$\tau:=\lim_{q\to \infty } \tau^q=\inf\{t\geq 0: Z(t) \notin D\}\land T.$$
    The proof is therefore completed. 
				
				
			\end{proof}	
			\section{Application and  simulation}\label{sec3}
            In this section, we give a realistic application of SDAEs in high dimensions  and check whether our theoretical result is in agreement with numerical simulation. 
            \subsection{Application of SDAEs in electrical circuits in high dimension}
           The numerical simulation of electrical circuits requires an appropriate mathematical model \cite{denk2008efficient, gunther2000cad, schein1999stochastic}. Most commonly, the Modified Nodal Analysis (MNA) approach is employed to combine physical laws such as energy or charge conservation with the characteristic equations of the network components (see \cite{gunther2000cad} and \cite[Chapter 2]{denk2008efficient}). This modeling is often performed while neglecting internal electrical noise \cite[Chapter 3]{schein1999stochastic}. In such cases, the resulting mathematical formulation takes the form of a differential-algebraic equation (DAEs)
           $$ A(t)dX(t) = f(t,X(t))dt.$$
          However, due to parasitic effects, there is an increasing number of applications in which this modeling is no longer sufficient. In such cases, noise significantly influences the system's behavior in an inherently nonlinear manner, making it impossible to treat noise merely as a small perturbation of the noise-free solution (see for example  \cite{Renate, denk2008efficient, schein1999stochastic}). As a result, time-domain simulation of noisy systems becomes essential \cite[pp. 13–20]{anile2007scientific}.
            The inner electrical noise is a random phenomenon. It is modeled as an additional stochastic current source parallel to the original electronic elements (see \cite[Figures 2.4, 2.9 and 2.11]{denk2008efficient} and \cite{demir2012analysis}). The noise intensity is given by physical characteristics \footnote{for example, we have the thermal noise of a resistor and shot noise of semiconductors}, and the noise models are added to the network equations, and the mathematical model becomes
            $$ A(t)dX(t) = f(t,X(t))dt+g(t,X(t))dW(t).$$
            Note that the unknown variable $X$ is a vector whose components are $q$: the charges of capacitances, $\phi$: the fluxes of inductances, $e$: all nodes potentials (except for the datum node i.e references nodes) $I_L$ and $I_V$: the branch currents of the current-controlled elements (inductance and voltage sources). The matrix  $A$ is the so-called capacitance matrix, and the value is the value of the capacitances and inductances in the model. The non-linear function $f$ contains the input of independent voltage and current sources as well as the conductances of the system. The function $g$ is the intensity of inner electrical noise (the quickly fluctuating stochastic perturbations of the system).\\
            In general, matrix $A$ and the functions $f$ and $g$ are very large depending on the number of compartments in the system \cite{Renate, denk2008efficient}. For example,  if we consider a system with $n_e$ nodes, $n_C$ capacitances, $n_R$ resistances, $n_L$ inductances, then we have $n_e+n_C+n_R+n_L$ unknown variables in the system that we need to solve. 
            
				\begin{figure}[h]
					\centering
					\includegraphics[width=0.8\linewidth, height=0.2\textheight]{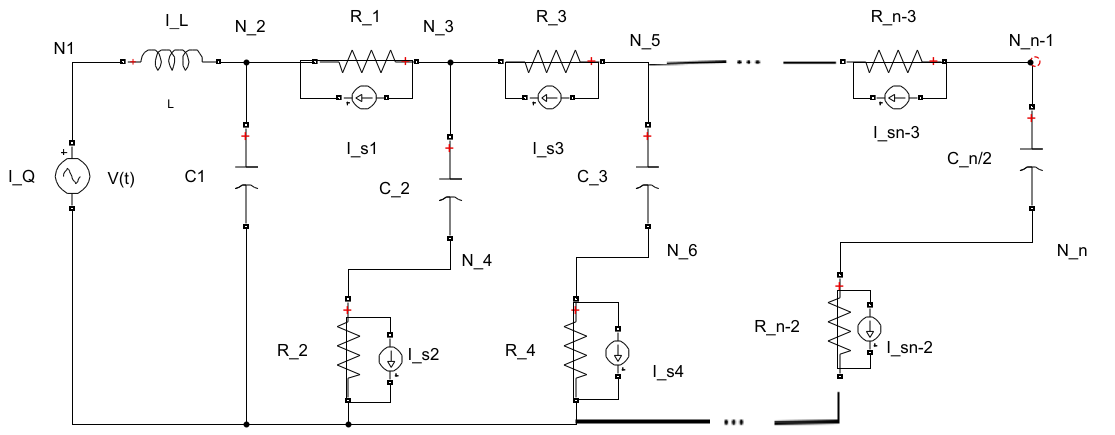}
					\caption{An example of an electrical circuit with $n$ nodes}
					\label{fig:capture}
				\end{figure}

             \subsection{Simulation in low dimension}
This example aims to verify whether our theoretical convergence order for
scheme \eqref{equa36a} as stated in Theorem \ref{theo3} is consistent with simulations. Here $D =\mathbb{R}^3$ and  we consider the following index-1 SDAEs.

	\begin{equation}\label{solu1}
		\begin{pmatrix}
			1 & 1 &0  \\
			0 & 0& 0\\
			0&-1&0
		\end{pmatrix}dX=f(t,X(t))dt+g(t,X(t))dW(t),~~ X(0)=X_0, ~~t\in\left[0,T \right],
	\end{equation}
	where
	$$f(x_1,x_2,x_3) =\begin{pmatrix}
		x_1-x_1^3+x_2-x_2^3  \\
		x_1^2+x_3\\
		x_1+x_2^3
	\end{pmatrix}$$ and $$ g(x_1,x_2,x_3)=\begin{pmatrix}
	x_1^2+x_2 & 0&x_3  \\
		0 & 0& 0\\
		0&	x_2^2&0
	\end{pmatrix}.$$
	Clearly, the matrix $A=	\begin{pmatrix}
		1 & 1 &0  \\
		0 & 0& 0\\
		0&-1&0
	\end{pmatrix}$ is a singular matrix and the functions f and g are local Lipschitz.
	By verifying the conditions of  \cite[Theorem 1]{serea2025existence}, we conclude that equation  \eqref{solu1} admits a unique solution. We can verify that the pseudo-inverse matrix $A^-$ is computed as
$$A^-=	\begin{pmatrix}
	1 & 0 &1  \\
	0 & 0& -1\\
	0&0&0
\end{pmatrix}.$$

The projector matrix $P$ is given by $$P=A^-A=	\begin{pmatrix}
	1 & 0 &0  \\
	0 & 1& 0\\
	0&0&0
\end{pmatrix},$$

and the projector matrix $R$ is therefore given by $$R=I_{3\times 3}-AA^-=	\begin{pmatrix}
	0 & 0 &0  \\
	0 & 1& 0\\
	0&0&0
\end{pmatrix}.$$

We recall that $X(t)$ admits the decomposition $X(t)=U(t)+V(t)$ and the corresponding constraint equation is given by
\begin{align*}
	A(t)V(t)+R(t)f(U(t)+V(t))&=	\begin{pmatrix}
		1 & 1 &0  \\
		0 & 0& 0\\
		0&-1&0
	\end{pmatrix}	\begin{pmatrix}
	v_1(t)  \\
	v_2(t)\\
v_3(t)
	\end{pmatrix}\nonumber\\
    &+\begin{pmatrix}
	0 & 0 &0  \\
	0 & 1& 0\\
	0&0&0
	\end{pmatrix}\begin{pmatrix}
	x_1-x_1^3+x_2-x_2^3  \\
	x_1^2+x_3\\
	x_1+x_2^3
	\end{pmatrix}\\
	&=\begin{pmatrix}
		v_1(t)+v_2(t)   \\
		(v_1(t)+u_1(t))^2+v_3(t)+u_3(t)\\
	-	v_2(t)
	\end{pmatrix}\\
	&=\begin{pmatrix}
		0   \\
		0\\
	0
	\end{pmatrix}.\\
\end{align*}
We therefore have
$$v_1(t)=0,~~v_2(t)=0 \text{ and } v_3(t)=-u_1^2(t)-u_3(t) ,~t\in [0,T].$$
 Consequentially, our constraint variable is not dependent on the noise and it is globally solvable. This means that equation \eqref{solu1} is an index-1 SDAEs.
 


 At this point we want to prove that the function $f(\cdot, \cdot)$ and $g(\cdot, \cdot)$ satisfy the monotone condition   with respect the variable $X$, i.e.
$$\left\langle (P(t)X)^T,A(t)^-f(t,X)\right\rangle +\dfrac{1}{2}\left|A^-(t)g(t,X) \right|^2_F \leq k(1+\left\|X \right\|^2 ),~t\in \left[ 0, T \right], X\in \mathbb{R}^3.$$
Indeed, we have 
\begin{align*}
\left\langle (P(t)X)^T,A(t)^-f(t,X)\right\rangle&=\begin{pmatrix}
	x_1(t)  \\
	  x_2(t)\\
   0
	\end{pmatrix}\begin{pmatrix}
	2x_1(t)-x_1^3(t)+x_2(t)   \\
	-x_1(t)-x_2^3(t)\\
	0
	\end{pmatrix}~~~~~~~~~~~~~~~~~~~~~~~~~~~~~~\\
	&=2x_1^2(t)-x_1^4(t)-x_2^4(t),
\end{align*}
and
\begin{align*}
	A^-(t)g(t,X)&=\begin{pmatrix}
		x_1^2(t)+x_2(t) & x_2^2(t) &x_3(t)  \\
		0&-x_2^2(t)&0\\
		0 & 0& 0\\
	\end{pmatrix}, \text{ this means that }\\
	|A^-(t)g(t)|^2&=(x_1^2(t)+x_2(t))^2+x_2^4(t)+x_3^2(t)+x_2^4(t)~~~~~~~~~~~~~~~~~~~~~~~~~~~~~~~~~~~~\\
	&\leq 2x_1^4(t)+2x_2^2(t)+2x_2^4(t)+2x_3^2(t).
\end{align*}
Finally we obtain
\begin{align*}
\left\langle (P(t)X)^T,A(t)^-f(t,X)\right\rangle+\dfrac{1}{2}\left|A^-(t)g(t,X)\right|^2_F& \leq 2x_1^2(t)+x_2^2(t)+x_3^2(t)\\
& \leq 2(1+x_1^2(t)+x_2^2(t)+x_3^2(t))\\
&\leq2 (1+\|X\|^2),~
t\in \left[ 0,T \right], X\in\mathbb{R}^3.
\end{align*}

 Consequently, the unique solution $X(\cdot)$ of equation \eqref{solu1}  exists and belongs to $\mathcal{M}^2(\left[0,T\right],\mathbb{R}^2 )$.\\
 

	
	
	
    For the numerical simulation we consider $N=2^{20},~T=$ and $X_0=(1,1,-1)$.\\

	
\begin{figure}[h]
	\centering
	\includegraphics[width=0.7\linewidth]{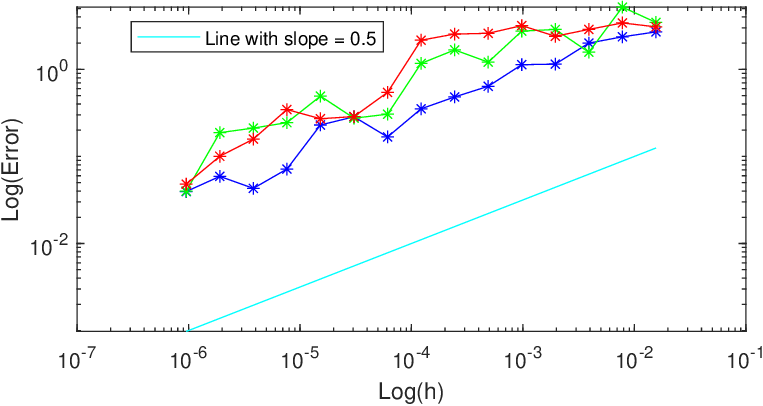}
	\caption{Pathwise convergence with our semi-implicit linearization scheme with three samples.}
	\label{fig:eroorr}
\end{figure}

In Figure \ref{fig:eroorr},
for three different realizations, we compute the convergence rate for each realization, and we obtain $0.461$, $0.4167$, and $0.4425$. Those convergence orders are close to $0.5$, this confirms our theoretical result in Theorem \ref{theo3}. Note that the error's  curves in the log scale are not straight. These are due to the fact that the bounded variable  $\epsilon$ in Theorem \ref{theo3} is random. 			

\end{document}